\title{An Abstract Spectral Approach to Horospherical Equidistribution}
\author{Christopher Lutsko}
\begin{document}

  \maketitle
  \begin{abstract}
    \noindent This paper introduces an abstract spectral approach to prove effective equidistribution of expanding horospheres in hyperbolic manifolds. The method, which is motivated by the approach to counting developed by (Lax-Phillips 1982), produces highly effective, explicit error terms. To exhibit the flexibility of this method we prove effective horospherical equidistribution theorems in $T^1(\half^{n+1})$ and in the higher rank setting, $\SL_n(\R) / \SO_n(\R)$.
   
  \end{abstract}

  \onehalfspacing
  \setlength{\abovedisplayskip}{1mm}

  \section{Introduction}
  
  Given a manifold, a flow, and an expanding submanifold under the action of the flow, a key question in dynamical systems is to establish when such a submanifold equidistributes, and moreover, to establish effective rates of equidistribution. Establishing this behavior is at the heart of some of the crowning achievements of homogeneous dynamics (e.g Ratner's theorems \cite{Ratner1991a,Ratner1991b}, and horospherical equidistribution \cite{Zagier1981, Sarnak1981, OhShah2013}). Moreover, these equidistribution results are crucial to the application of dynamical methods to number theory problems. One such dynamical method, uses the Margulis thickening trick to pass from mixing to equidistribution statements. 

  While this dynamical method is very strong, it is in some sense wasteful. That is, there are some more specific questions for which spectral methods have proven to be more effective and flexible. In particular, for counting points in group orbits an abstract spectral method developed by Lax and Phillips \cite{LaxPhillips1982} (itself modeled on previous approaches of Huber, Patterson, and Selberg \cite{Huber1956, Patterson1975, Selberg1989}) was used to obtain effective asymptotic results beyond the reach of these dynamical equidistribution methods. Moreover, the abstract spectral approach is extremely 'soft', in that it requires very little specific input from the problem. Thus, the method is potentially extremely flexible (see \cite{Kontorovich2009, KontorovichLutsko2022} for extensions). 

  In this paper, we introduce a method for proving effective horospherical equidistribution which is based on the abstract spectral method to counting. To exhibit this method we will recover and improve on two existing effective horospherical equidistribution theorems. In the rank one setting we recover (with an improved error rate) an effective equidistribution theorem of Mohammadi and Oh \cite{MohammadiOh2015} for expanding horospheres in $\Isom^+(\half^{n+1})$. Then we extend our method to higher rank groups in order to prove an effective equidistribution theorem for expanding horospheres in quotients of $\SL_n(\R)/\SO_n(\R)$. This relates to numerous papers, for example \cite{EinsiedlerMargulisVenkatesh2009, KleinbockMargulis2012,MohammadiGolsefidy2014, Yang2016, AkaETAL2020}, and several others.  

  \begin{remark}
    It is worth noting that a related abstract spectral method was used by Str\"{o}mbergsson \cite{Strombergsson2013}, S\"{o}dergren \cite{Sodergren2012}, and Edwards \cite{Edwards2017, Edwards2021} to tackle the same types of problems. In fact in some contexts their method achieves stronger error terms. That is, because they can avoid smoothing, their $T$ dependence (the height of the horosphere) is improved, but they sacrifice in the dependence on the regularity of the test function. Since, in application, the test functions are normally approximations to indicator functions, this dependence can be very important. In addition, the method developed herein allows us to tackle somewhat more general problems: in the higher rank case we can expand in different directions with different rates. 

  \end{remark}

  \subsection{Plan of paper}
  In the remainder of the introduction, we present our main theorems for $T^1(\half^{n+1})$, $\SL_3(\R)$, and $\SL_n(\R)$ in that order. The rest of the paper is arranged as follows
  \begin{itemize}
  \item Section \ref{s:Preliminaries} presents some preliminaries on rank one hyperbolic geometry and spectral theory.
  \item  Section \ref{s:n=1} presents a simplified version the proof of Theorem \ref{thm:main asymptotic} in the special case when $n=1$ and the function is right $K$-invariant. This acts as a warm-up.
  \item Section \ref{s:rank 1} gives the full proof of Theorem \ref{thm:main asymptotic} for general $n$. That is, the rank one setting.
  \item Section \ref{s:prelimRank} presents the necessary preliminaries in the higher rank setting.
  \item Section \ref{s:SL3R} presents the proof of Theorem \ref{thm:SL3R}, the $\SL_3(\R)$ result.
  \item Section \ref{s:SLnR} then presents the proof of Theorem \ref{thm:SLnR}, the $\SL_n(\R)$ result.
  \end{itemize}
  Where possible we will avoid repeating details.

  \subsection{Main theorem -- rank $1$}
  
  Let $n \ge 1$ be fixed, and consider the $n+1$-dimensional hyperbolic upper half-space
  \begin{align*}
    \half^{n+1} := \{(x_1, \dots, x_n,y) \ : \ y >0 \}.
  \end{align*}
  Let $G:= \Isom^+(\half^{n+1})$, the group of orientation preserving isometries; which is isomorphic to the identity component of $\SO(n+1,1)$. When $n=1,2$, $G$ is isomorphic to $\PSL_2(\R)$ or $\PSL_2(\C)$ respectively. Fix a discrete, not co-compact, Zariski dense, geometrically finite subgroup $\Gamma < G$. Moreover assume the critical exponent $\delta > n/2$.

  Consider the right regular representation of $G$ acting on $L^2(\Gamma \bk G)$ (see Section \ref{s:Preliminaries} for definitions). We will also assume that the right regular representation of $G$ acting on the space $L^2(\Gamma \bk G)$ has no complementary series representation. We call such a  subgroup $\Gamma$ 'nice'. For $n=1$ and $n=2$ all of the groups we consider are nice. For $n > 2$ this is quite a restriction, but we impose it to avoid having to dismiss terms coming from these non-spherical complementary series representations. Recently, Edwards-Oh have treated the contribution of these non-spherical complementary series representations to a related problem in some detail \cite{EdwardsOh2021}, however it is simpler for us to restrict our results to nice subgroups.

  Fix a Cartan decomposition of $G = HAK$ where $K$ is a maximal compact subgroup, $A:=\{a_y \ : \ y\in \R_{>0} \}$ is a one parameter diagonalizable subgroup, and $H:= \{ g\in G \ : \ a_y g a_{1/y} \to e \text{ as } y \to 0 \}$ is the expanding horospherical subgroup for $a_{1/y}$. Further, let $M$ be the centralizer of $A$ in $K$. Given an $F \in C_c^\infty(\Gamma \bk G/M)$, our goal in this paper is to study the asymptotic behavior of the average
  \begin{align}
    \cM_T(F) : = \int_{h\in \Gamma_H \bk H} F(h a_{1/T}) \rd h,
  \end{align}
  in the limit as $T\to \infty$. Here $\Gamma_H:= \Gamma\cap H$ and $\rd h$ denotes the Haar measure on $H$. 

  To state our main theorem let
  \begin{align*}
    -\Delta := y^2 (\partial_{x_1x_1} + \dots + \partial_{x_nx_n} + \partial_{yy}) - (n-1) y \partial_y
  \end{align*}
  denote the hyperbolic Laplacian. Then, from work of Lax-Phillips \cite{LaxPhillips1982}, the $L^2(\Gamma \bk \half^{n+1})$ spectrum of the Laplacian consists of finitely many discrete eigenvalues $\lambda_0 < \lambda_1 \le \dots \le \lambda_k$ in the region $[0,(n/2)^2)$ and continuous spectrum above $n^2/4$. Write $\lambda_j = s_j(n-s_j)$ with $n/2 \le s_j\le \delta$. Then by work of Patterson \cite{Patterson1976}, and Sulliven \cite{Sullivan1984}, we have $s_0 = \delta$.

    Finally, let $\|F\|_{1,\infty} : = \| \partial_y F(x+iy) \|_{\infty}$ denote the first Sobolev norm of $F$, where $\|\cdot\|_{\infty}$ is the $L^\infty$ norm on $\Gamma \bk G$. Throughout we let $\|F\|_\Gamma = \|F\|_{L^2(\Gamma \bk G)}$. To ease notation, let $P(n):= n^2-3n+10$. Our main theorem, in rank one, is the following
    
  \begin{theorem}\label{thm:main asymptotic}
    Let $\Gamma<G$ be a 'nice', geometrically finite, Zariski dense subgroup with  critical exponent $\delta > n/2$. Let $H$ be a horospherical subgroup. Then for any $F \in C_c^\infty(\Gamma \bk G/M)$,  there exists a constant $c_0,$ such that
    \begin{align}
      \cM_T(F) = c_0 T^{\delta -n}(1 + O(T^{-\eta_{\text{cont}}}(\log T)^{\frac{4}{P(n)}} + T^{-\eta_{s_1}} )\|F\|_\Gamma^{4/P(n)} \|F\|_{1,\infty}^{\frac{P(n)-4}{P(n)}})
    \end{align}
    where
    \begin{align*}
      \eta_{\text{cont}} = \frac{4(\delta-n/2)}{P(n)}
      \qquad \text{ and } \qquad
      \eta_{s_1} = \frac{4(\delta - s_1)}{P(n)}.
    \end{align*}
    Further, if $F$ is right $K$-invariant, there exist constants $c_0, c_1 \dots , c_k$ such that
    \begin{align}\label{Mav K}
      \cM_T(F) = c_0 T^{\delta -n} + c_1 T^{s_1-n} + \dots + c_k T^{s_k-n} + O(T^{\delta/3-2n/3}(\log T)^{2/3}\|F\|_\Gamma^{2/3} \|F\|_{1,\infty}^{1/3}),
    \end{align}
    \eqref{Mav K} holds even if $\Gamma$ is not nice.
  \end{theorem}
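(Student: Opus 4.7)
Following the Lax--Phillips abstract spectral philosophy, my strategy is to realize $\cM_T(F)$ as an inner product against an automorphic test kernel built out of a $\Gamma_H$-invariant cut-off on $G$, and then to unfold that kernel spectrally via the decomposition of $L^2(\Gamma\bk G)$. The ``nice'' hypothesis plays its role by ensuring that only spherical representations appear, so every nontrivial spectral component may be controlled in terms of $s_1$ and of the tempered spectrum. The $K$-invariant statement \eqref{Mav K} will then fall out as the special case in which every relevant matrix coefficient is an exact spherical function, so that each discrete eigenvalue contributes a genuine main term rather than being absorbed into the remainder.

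First I would smooth $F$ on the right by a nonnegative bump $\psi_\epsilon \in C_c^\infty(G)$ of total mass one supported in a ball of radius $\epsilon$ about the identity. The first Sobolev inequality gives
\[
\cM_T(F) = \cM_T(F * \psi_\epsilon) + O(\epsilon \|F\|_{1,\infty}),
\]
which is what introduces $\|F\|_{1,\infty}$ into the final error. Next I would construct a $\Gamma_H$-periodic cut-off $k_{T,\epsilon}$ on $G$, supported (up to $\epsilon$) inside $H \cdot a_{1/T} \cdot B_\epsilon$ and adapted to the horospherical direction, and form the automorphic series
\[
K_{T,\epsilon}(g) := \sum_{\gamma \in \Gamma_H \bk \Gamma} k_{T,\epsilon}(\gamma g).
\]
Unfolding against $\Gamma_H \bk H$ then gives $\langle K_{T,\epsilon}, F\rangle_{\Gamma\bk G} = \cM_T(F * \psi_\epsilon) + O(\epsilon \|F\|_{1,\infty})$, reducing the analysis of $\cM_T(F)$ to the spectral expansion of $K_{T,\epsilon}$.

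I would then decompose $K_{T,\epsilon}$ into its projections onto the discrete and continuous components of $L^2(\Gamma\bk G)$. The base eigenfunction for $\lambda_0$ is, by Patterson--Sullivan, proportional to the $\delta$-Patterson density, and its horospherical integral along $\Gamma_H \bk H a_{1/T}$ behaves like $T^{\delta - n}$, producing the leading $c_0 T^{\delta - n}$. Each remaining discrete eigenvalue $\lambda_j$ yields a contribution of size $T^{s_j - n}$: in the right $K$-invariant case these survive as the explicit summands of \eqref{Mav K}, while for general $F$ they are absorbed into the $T^{-\eta_{s_1}}$ error via Cauchy--Schwarz against the spectral projection. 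The continuous spectrum contributes at the tempered scale $s = n/2$ and produces the $T^{-\eta_{\text{cont}}}$ error, with the $\log T$ factor arising from the endpoint of the Plancherel integral. Finally, balancing the smoothing cost $\epsilon \|F\|_{1,\infty}$ against the spectral cost of the form $\epsilon^{-\alpha} \|F\|_\Gamma T^{-\eta}$, where $\alpha$ is the $\epsilon$-scaling of $\|k_{T,\epsilon}\|_2$ fixed by $\dim G$ and the codimension of $H$, yields the weights $\|F\|_\Gamma^{4/P(n)} \|F\|_{1,\infty}^{(P(n)-4)/P(n)}$.

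\textbf{Main obstacle.} The crux of the proof is the second step: choosing $k_{T,\epsilon}$ so that (i) unfolding recovers exactly $\cM_T(F * \psi_\epsilon)$ with a transparent smoothing error, and (ii) the resulting spectral matrix coefficients can be evaluated asymptotically with sharp constants. Concretely, one needs explicit asymptotics for the horospherical integrals of spherical eigenfunctions in dimension $n+1$; it is this calculation that simultaneously produces each power $T^{s_j - n}$ and the combinatorial exponent $P(n) = n^2 - 3n + 10$ governing the $\epsilon$ optimization. A secondary difficulty is handling non-spherical components for general $F \in C_c^\infty(\Gamma \bk G / M)$: these are precisely the terms that non-spherical complementary series would obstruct, and it is the ``nice'' hypothesis that lets us sidestep the delicate analysis of \cite{EdwardsOh2021} and close the estimate at the stated rate.
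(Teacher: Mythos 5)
Your high-level scaffolding — thicken/smooth, build a $\Gamma_H$-periodic kernel, unfold to an inner product, split $L^2(\Gamma\bk G)$ spectrally, handle the base eigenvalue via Patterson--Sullivan, and finally balance the smoothing error against the spectral error — does match the paper's architecture, and you correctly identify why $\|F\|_{1,\infty}$, $\|F\|_\Gamma$, and the $K$-invariance distinction enter. But you have misdiagnosed the ``main obstacle,'' and the step you propose in its place would not close.

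You say the crux is to ``evaluate the spectral matrix coefficients asymptotically with sharp constants'' by computing ``explicit asymptotics for the horospherical integrals of spherical eigenfunctions.'' Those horospherical integrals are constant terms of automorphic eigenfunctions (essentially Maass-form Fourier data), which are not explicitly available for a general geometrically finite $\Gamma$, and the whole point of the Lax--Phillips abstract method is to avoid ever computing $\wh{\Psi_{\vep,T}}(\lambda)$ pointwise. The paper instead uses an ODE/Green's-function argument: writing $f(y)=\int_{\cX}\int_K \wt F(h(\vect{x})a_yk)\,\rd k\,\rd\vect{x}$, one shows $f$ solves $(y^2\partial_{yy}-(n-1)y\partial_y-\lambda)f=g$ with $g$ controlled by $\|(\cC-\lambda)\wt F\|_\Gamma$, so $f(y)=Ay^s+By^{n-s}+$ (remainder), and then eliminates the unknown $A,B$ by sampling at two heights. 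This produces the \emph{Main Identity}
\begin{align*}
\Psi_{\vep,T}=K_T(\cC)\Psi_{\vep,1}+L_T(\cC)\Psi_{\vep,b}\quad\text{(a.e.)},
\end{align*}
with explicit polynomial-logarithmic bounds on $K_T(s),L_T(s)$. All the $T$-dependence is then carried by the scalar symbols $K_T,L_T$, while the spectral data of $\Psi_{\vep,1},\Psi_{\vep,b}$ is fixed; this is what lets the abstract spectral theorem and a single Cauchy--Schwarz finish the continuous and exceptional parts. That identity — not an explicit eigenfunction asymptotic — is the load-bearing idea your proposal is missing.

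Two smaller points. First, the exponent $P(n)=n^2-3n+10$ is not ``fixed by $\dim G$ and the codimension of $H$'' as you suggest: it comes from the $\vep$-scaling $\|\Psi_{\vep,1}\|_\Gamma\ll\vep^{-(n^2-3n+6)/4}$, and the ``$+4$'' appears when balancing against the $O(\vep\,T^{\delta-n}\|F\|_{1,\infty})$ smoothing loss; your ``balance $\epsilon\|F\|_{1,\infty}$ vs $\epsilon^{-\alpha}\|F\|_\Gamma T^{-\eta}$'' is right in spirit but you need the correct $\alpha$. Second, the reason the discrete exceptional eigenvalues give genuine main terms in the $K$-invariant case is structural: the $K$-fixed subspace of $\mathscr H_j$ is one-dimensional, so the mean-value argument extracting $\vep$-dependence of the projections applies; for general $F$ the eigenspaces are infinite-dimensional and that control is lost, so those terms must be absorbed into the $T^{-\eta_{s_1}}$ remainder. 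Your proposal gestures at this but does not give the dimensional reason.
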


  \begin{remark}
    This theorem improves on \cite[Theorem 1.7]{MohammadiOh2015} in that the error exponent $\eta$ is sharper. While \cite{MohammadiOh2015} do not state an explicit error term, a similar method is used in \cite{LeeOh2013} in the case $n=1,2$. For $n=2$ their $\eta_{s_1} = 2(\delta-s_1)/7$ while ours is $(\delta-s_1)/2$. Moreover Oh-Shah \cite{OhShah2013} show that the coefficient $c_0$ is proportional to $\BR(F)$ -- the Burger-Roblin mass of $F$.
    However, while \cite{Edwards2017} achieves a stronger error term in the $T$ aspect, Theorem \ref{thm:main asymptotic} is stronger in the Sobolev norm of $F$. Thus, for example when counting circles in an Apollonian circle packing, Edwards achieves an error term of size $T^{\delta -(\delta-s_1)/9}$ while applying Theorem \ref{thm:main asymptotic} achieves an error term of size $T^{\delta-2(\delta-s_1)/9}$ (although both results are worse than the more direct method of Kontorovich and the author \cite{KontorovichLutsko2022}). 
  \end{remark}
  \begin{remark}
    Note that for $K$-invariant functions we can extract all lower order terms coming from the discrete spectrum. This is a consequence of the fact that the subspace of $K$-fixed vectors in the $j^{th}$ eigenspace is one dimensional, while the full $j^{th}$ eigenspace is infinite dimensional, which prevents us from extracting some dependencies. 
  \end{remark}

  \begin{remark}
    One could possibly use the machinery of Edwards-Oh \cite{EdwardsOh2021} to remove the condition that $\Gamma$ is nice. We leave this extension for future work. 
  \end{remark}

  \subsection{Main theorem -- $\SL_3(\R)$}

  To keep matters simple, for higher rank, we first present our results on $\SL_{3}(\R)$. For this subsection, let $G:=\SL_3(\R)$. Consider the following subgroups
  \begin{itemize}
  \item $A:= \left\{ a(\vect{y}):= \begin{pmatrix} y_1y_2 & 0 & 0 \\ 0 & y_1 & 0 \\ 0  & 0 & 1 \end{pmatrix} \ : \ \vect{y}\in \R_{>0}^2 \right\}$ the two-parameter diagonal subgroup. 
  \item $H:= \left\{ h(\vect{x}):= \begin{pmatrix} 1 & x_2 & x_3 \\ 0 & 1 & x_1 \\ 0  & 0 & 1 \end{pmatrix} \ : \ \vect{x}\in \R^3 \right\}$ the upper-triangular subgroup.
  \item $K = \SO(3,\R)$ the maximal compact subgroup of $G$.
  \end{itemize}
  Then $\GL_3(\R)=HAKZ_3$ where $Z_3$ is a one parameter diagonal subgroup. Let $\cH:= G/ KZ_3\cong H A$ denote the generalized upper half plane (see \cite{Goldfeld2006}) or the space of lattices. The natural, invariant metric on $\cH$ is
  \begin{align*}
    \rd V = \frac{\rd x_1 \rd x_2 \rd x_3 \rd y_1 \rd y_2}{y_1^3 y_2^{3}}
  \end{align*}
  There are two Casimir operators in this case
  \begin{align*}
    -\Delta_1&:=y_1^2\partial_{y_1y_1} + y_2^2 \partial_{y_2y_2}-y_1y_2\partial_{y_1y_2} + y_1^2(x_2^2+y_2^2)\partial_{x_3x_3}+ y_1^2 \partial_{x_1x_1} + y_2^2\partial_{x_2x_2}+2y_1x_2 \partial_{x_1x_3},\\
      -\Delta_2 &:= -y_1^2y_2\partial_{y_1y_1y_2} + y_1y_2^2 \partial_{y_1y_2y_2} -y_2^2\partial_{y_2y_2}+y_1^2\partial_{y_1y_1}\\
      &\phantom{:=}-y_1^3y_2^2\partial_{x_3x_3y_1} + y_1 y_2^2 \partial_{x_2x_2y_1} -2y_1^2y_2x_2\partial_{x_1x_3y_2}+(-x_2+y_2)y_1^2y_2\partial_{x_3x_3y_2}\\
    &\phantom{:=}-y_1^2y_2\partial_{x_1x_1y_2}+2y_1^2y_2^2\partial_{x_1x_2x_3}+2y_1^2y_2x_2\partial_{x_2x_3x_3}  + 2y_1^2x_2\partial_{x_1x_3} +y_1^2(x_2^2+y_2^2)\partial_{x_3x_3}\\
    &\phantom{:=}  + y_1^2\partial_{x_1x_1}-y_2^2\partial_{x_2x_2}.
    \end{align*} 

  Let $\Gamma<\SL_3(\Z)$ be a co-finite subgroup. Consider the Hilbert space $\mathscr{H} = L^2(\Gamma \bk G)$, $G$ acts by the right regular representation on $\mathscr{H}$, which decomposes into irreducible subspaces (see Section \ref{s:prelimRank}). Both $\Delta_1$ and $\Delta_2$ act as scalars on each irreducible subspace. Thus, up to multiplicity we can parameterize the irreducible subspace by a pair $(\lambda_1, \lambda_2)$. If we consider the spectrum of $\Delta_1$, then this consists of finitely many eigenvalues
  \begin{align*}
    0=\lambda_1^{(0)} < \lambda_1^{(1)} < \dots < \lambda_1^{(k_1)} < 1
  \end{align*}
  of finite multiplicity. With continuous spectrum above $1$. 
  Since we work with co-finite subgroups, the base eigenvalue is $0$, and since the group satisfies property $T$, we know there is a spectral gap. In particular for $\Gamma = \SL_3(\Z)$ we have that $\lambda_1^{(1)}>1$ (see \cite{Miller2002}). Let $\vect{\lambda}^{(i)}$ for $i=0, \dots, k$ denote the discrete points in the joint spectrum of $\lambda_1$ and $\lambda_2$. Any point in the discrete spectrum has finite multiplicity.

  Rather than work with $\vect{\lambda}^{(i)}$ it is more practical to work with vectors $\vect{\nu}=(\nu_1,\nu_2) \in [0,1)^2$. Let $(\nu_1,\nu_2)$ solve the equations $\lambda_1 =-3(\nu_1^2 +\nu_2^2-\nu_1-\nu_2+\nu_1\nu_2 $ coming from $\Delta_1$ and $\lambda_2 = (\nu_2-\nu_1)(2\nu_1^2+\nu_2(-3+2\nu_2)+\nu_1(-3+5\nu_2)$, coming from $\Delta_2$. These equations come from eigenvalue equations for the $I$-function $I(z) = y_1^{\nu_1+2\nu_2}y_2^{2\nu_1+\nu_2}$ and there are $6$ solutions to the pair of equations, call them $(\nu_{i,1},\nu_{i,2})$ for $i=1, \dots, 6$. The continuous spectrum corresponds to $\nu_1= 1/3+it_1$ and $\nu_2 = 1/3+ it_2$. For convenience we also define $s=\nu_1+2\nu_2$ and $r=\nu_2+2\nu_1$ (the exponents of $y_1$ and $y_2$ respectively, in the $I$ function).



  For $F\in C_c^\infty(\Gamma \bk \cH)$, our aim is to study the asymptotic behavior of
  \begin{align*}
    \cM_{\vect{T}}(F) : = \int_{\Gamma_H \bk H} F(h a(1/T_1,1/T_2)) \rd h.
  \end{align*}
  where $\vect{T}=(T_1,T_2)$. Now, for any $\vect{\lambda} = (\lambda_1, \lambda_2)$ define the following quantity
  \begin{align}\label{m def}
    m_{\vect{T}}(\vect{\lambda}): = \sum_{i=1}^6 c_i T_1^{s_i}T_2^{r_i}, 
  \end{align}
  for some constants $c_i$ which could be made explicit.
  We need this cumbersome quantity since the term governing the error term will depend on the (a priori unknown) relationship between $s$, $r$, $T_1$ and $T_2$.
  
  \begin{theorem}\label{thm:SL3R}
    Let $F\in C_c^\infty(\Gamma \bk \cH)$, then there exist constants $c_0,c_1, \dots, c_k<\infty$, depending only on $\Gamma$ and $F$, such that
    \begin{align}
      \cM_{\vect{T}}(F) = &c_0m_{\vect{T}}(0)  + c_1 m_{\vect{T}}(\vect{\lambda}^{(1)}) + \dots + c_k m_{\vect{T}}(\vect{\lambda}^{(k)})\\
      &\phantom{+++++++++}+ O(T_1^{-1/2}T_2^{-1/2}(\|F\|_{1,\infty} \|F\|_\Gamma\log T_1 \log T_2)^{1/2}),\notag
    \end{align}
    in the limit as $T_1T_2 \to \infty$. In particular, if $T_1,T_2 \to \infty$ the main term is $C \int_{\Gamma \bk \cH} F(z) \rd z$.
  \end{theorem}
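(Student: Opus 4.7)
The approach is to imitate the strategy developed for Theorem \ref{thm:main asymptotic} in rank one, now adapted to the two-parameter diagonal subgroup $A$. First, I would introduce a smoothing in the $A$-direction: for a small parameter $\vect{\epsilon}=(\epsilon_1,\epsilon_2)$, pick a non-negative bump $\psi_{\vect{\epsilon}}$ on $A$ of total mass one and supported in a box of size $\epsilon_1\times\epsilon_2$ around the identity, and set
\begin{align*}
  \cM^\psi_{\vect{T}}(F):=\int_A \psi_{\vect{\epsilon}}(a)\,\cM_{\vect{T}\cdot a^{-1}}(F)\,\rd a.
\end{align*}
Unfolding the $\Gamma_H\bk H$-integral against the $A$-smoothing rewrites $\cM^\psi_{\vect{T}}(F)$ as an inner product $\langle F,\Psi_{\vect{T},\vect{\epsilon}}\rangle_{L^2(\Gamma\bk G)}$, where $\Psi_{\vect{T},\vect{\epsilon}}$ is the $\Gamma$-automorphisation of a right $KZ_3$-invariant bump function supported on the thickened horospherical slab $H\cdot a(1/T_1,1/T_2)\cdot\mathrm{supp}(\psi_{\vect{\epsilon}})$.

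Second, I would spectrally decompose $\Psi_{\vect{T},\vect{\epsilon}}$ via the Langlands decomposition of $L^2(\Gamma\bk G)$ into discrete (cuspidal plus residual) pieces and continuous pieces coming from both minimal-parabolic and maximal-parabolic Eisenstein series. Because the unfolded kernel consists of a horospherical slice in $H$ smoothed in $A$, pairing it against a joint $\Delta_1,\Delta_2$-eigenfunction $\phi_j$ with spectral parameters $\vect{\nu}^{(j)}$ reduces, via the Jacquet/Whittaker constant-term expansion of $\phi_j$ along $H$, to an integral over $A$ of $\psi_{\vect{\epsilon}}$ against the $I$-function $y_1^{s}y_2^{r}$ summed over the Weyl group of $\SL_3$. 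That Weyl group has six elements, which are precisely the source of the six solutions $(\nu_{i,1},\nu_{i,2})$ in the setup, each contributing a term $c_i T_1^{s_i}T_2^{r_i}$. Summing over the finitely many discrete parameters $\vect{\nu}^{(j)}$ reproduces the main terms $c_0 m_{\vect{T}}(0)+c_1 m_{\vect{T}}(\vect{\lambda}^{(1)})+\dots+c_k m_{\vect{T}}(\vect{\lambda}^{(k)})$.

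Third, I would bound the continuous contribution. For Eisenstein spectral parameters on the line $\mathrm{Re}\,\nu=1/3$, the analogous Weyl-group sum produces integrands of size $T_1^{-1}T_2^{-1}$, and Cauchy--Schwarz gives
\begin{align*}
  \bigl|\cM^\psi_{\vect{T}}(F)-\text{main terms}\bigr|\ll T_1^{-1}T_2^{-1}\,\|F\|_\Gamma\,\|\Psi_{\vect{T},\vect{\epsilon}}\|_\Gamma,
\end{align*}
where $\|\Psi_{\vect{T},\vect{\epsilon}}\|_\Gamma^2$ scales like $(\epsilon_1\epsilon_2)^{-1}T_1T_2$ from the width of the smoothing box and the Jacobian of the conjugation $a(1/\vect{T})$. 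Finally, desmoothing in $A$ costs $\ll(\epsilon_1+\epsilon_2)\|F\|_{1,\infty}$ (times the size of the main term) by first-order Taylor expansion in $A$. Balancing the two error sources with $\epsilon_i$ chosen of order $T_i^{-1}\log T_i$ yields the stated bound of the form $T_1^{-1/2}T_2^{-1/2}(\|F\|_{1,\infty}\|F\|_\Gamma\log T_1\log T_2)^{1/2}$.

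The main obstacle will be the second step: carefully computing the inner product $\langle\phi_j,\Psi_{\vect{T},\vect{\epsilon}}\rangle$, tracking the six Weyl-group contributions, and identifying the explicit constants $c_i$ that define $m_{\vect{T}}$. Treating the minimal- and maximal-parabolic Eisenstein families on an equal footing in the error bound will also require some care, since the two families have spectral parameters of different dimensions and the associated Whittaker expansions must both be controlled uniformly.
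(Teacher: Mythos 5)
Your overall scaffolding (thicken in the $A$-direction, spectrally decompose, extract main terms from the finitely many discrete parameters, Cauchy--Schwarz the continuous part, de-smooth and optimize) matches the paper's strategy, but the mechanism you propose for producing the main terms and the $T_1^{-1}T_2^{-1}$ decay on the tempered spectrum --- a Jacquet/Whittaker constant-term expansion of each automorphic form along $H$, summed over the Weyl group --- is exactly what the paper's abstract framework is built to avoid. The paper's central technical object is the Main Identity $\Psi_{\vep,\vect{T}} = \sum_{i=1}^6 K_{i,\vect{T}}(\vect{\Delta})\Psi_{\vep,\vect{b}_i}$, obtained by unfolding to the differential equations $D_i f(\vect{y}) = \int_{\Gamma_H\bk H}(\Delta_i-\lambda_i)F\,\rd h$ and solving them with a Green's function; the Weyl-group combinatorics you invoke enter only as the six-dimensional solution space of the homogeneous system, not as a Whittaker expansion. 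This yields $K_{i,\vect{T}}(\vect{\lambda})\ll T_1^{-1}T_2^{-1}\log T_1\log T_2$ on the tempered line with no input from the Langlands decomposition or the minimal-versus-maximal parabolic distinction you flag as the main obstacle. Your route could be made to work, but at the cost of precisely the explicit machinery the paper is trying to sidestep.

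Beyond the choice of route, there is a genuine gap in your error analysis. You claim
\begin{align*}
  \bigl|\cM^\psi_{\vect{T}}(F)-\text{main terms}\bigr|\ll T_1^{-1}T_2^{-1}\,\|F\|_\Gamma\,\|\Psi_{\vect{T},\vect{\epsilon}}\|_\Gamma
  \quad\text{with}\quad
  \|\Psi_{\vect{T},\vect{\epsilon}}\|_\Gamma^2\asymp (\epsilon_1\epsilon_2)^{-1}T_1T_2.
\end{align*}
This double-counts the $T$-dependence: if the spectral transform $\wh{\Psi_{\vect{T},\vect{\epsilon}}}(\vect{\lambda})$ on the tempered spectrum is already of size $T_1^{-1}T_2^{-1}$, that decay is absorbed into $\|\Psi_{\vect{T},\vect{\epsilon}}\|_\Gamma$ by Plancherel, and you cannot also pull it out of the integrand. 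The claimed scaling of $\|\Psi_{\vect{T},\vect{\epsilon}}\|_\Gamma$ growing like $T_1T_2$ is not substantiated, and the proposed balancing $\epsilon_i\asymp T_i^{-1}\log T_i$ does not recover the theorem's $T_1^{-1/2}T_2^{-1/2}$ error. The paper sidesteps all of this by first factoring $\wh{\Psi_{\vep,\vect{T}}}(\vect{\lambda}) = \sum_i K_{i,\vect{T}}(\vect{\lambda})\wh{\Psi_{\vep,\vect{b}_i}}(\vect{\lambda})$ via the Main Identity, and \emph{then} applying Cauchy--Schwarz with $\|\Psi_{\vep,\vect{1}}\|_\Gamma\ll\vep^{-1}$ at a fixed base height, independent of $T$. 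This cleanly decouples the $T$-decay in $K_{i,\vect{T}}$ from the $\vep$-blowup in the norm; the correct balancing is $\vep\asymp T_1^{-1/2}T_2^{-1/2}(\log T_1\log T_2)^{1/2}(\|F\|_\Gamma/\|F\|_{1,\infty})^{1/2}$.
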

  In the higher rank we write $\|F\|_{1,\infty} : = \| \nabla_{\vect{y}} F(\vect{x},\vect{y})\|_\infty$.

  \begin{remark}
    A similar proof method was used by Edwards \cite{Edwards2021} to achieve an excellent error term in the more general Lie group setting. Theorem \ref{thm:SL3R} is still worthwhile proving, given the simplicity and directness of our method here, and since the $F$ dependence is extremely good. Thus, while Edwards achieves a better $T$ dependence, he sacrifices in the $F$ dependence. 
  \end{remark}

  \subsection{Main theorem -- $\SL_n(\R)$}

  Turning now to the group $G:=\SL_n(\R)$ for $n \ge 2$. Again, we employ an Iwasawa decomposition. Consider the following subgroups
  \begin{itemize}
  \item $A:= \left\{ a(\vect{y}):=\begin{pmatrix}
    y_1\cdots y_{n-1} & & & & \\
    & y_1 \cdots y_{n-2} & & & \\
    & & \ddots & & \\
    & & & y_1 & \\
    & & & & 1
    \end{pmatrix}
    \ : \ \vect{y}\in \R_{>0}^{n-1} \right\}$ the $n-1$-parameter diagonal subgroup. 
  \item $H:= \left\{ h(\vect{x}):= \begin{pmatrix}
    1 & x_{1,2} & x_{1,3} & \dots &  & x_{1,n} \\
     & 1 & x_{2,3} & \dots &  & x_{2,n} \\
     &  & \ddots &  &    & \vdots \\
     &  &  & \dots & 1 & x_{n-1,n} \\
     &  &  &  &  & 1 
  \end{pmatrix} \ : \ x_i \in \R \right\}$
    the upper-triangular subgroup of dimension $n(n-1)/2$.
  \item $K = \SO_n(\R)$ the maximal compact subgroup of $G$.
  \end{itemize}
  Once again we can write $\GL_n(\R)=HAKZ_d$ with $Z_d$ a one-parameter subgroup. Let $\cH:=HA$ denote the generalized upper half-space. The invariant metric on $\cH$ is
  \begin{align*}
    \rd V = \prod_{1\le i < j\le n}\rd x_{i,j}\prod_{k=1}^{n-1} y_k^{-k(n-k)-1}\rd y_k.
  \end{align*}
  Let $\Delta_1, \dots, \Delta_{n-1}$ denote the Casimir operators.

  Let $\Gamma < \SL_n(\Z)$ be a cofinite subgroup with $\Gamma_H\bk G$ is closed. Again, the Casimir operators act like scalars on the space of irreducibles. We denote the exceptional values (that is the values outside the tempered spectrum)
  \begin{align*}
    0 = \vect{\lambda}^{(0)} < \vect{\lambda}^{(1)} \le \dots \le \vect{\lambda}^{(k)},
  \end{align*}
  where $\lambda_i^{(j)}$ is the value of $\Delta_i$ acting on that irreducible subspace. 

  Rather than work with the spectral parameters $\vect{\lambda}$ it is more convenient to  work with the corresponding $\vect{\nu}$ coming from the $I$-functions. That is, set
  \begin{align*}
    I_{\vect{\nu}}(\vect{y}) : = \prod_{i=1}^{n-1}\prod_{j=1}^{n-1} y_i^{b_{ij}\nu_j}
  \end{align*}
  where
  \begin{align*}
    b_{ij} := \begin{cases}
      ij, &\mbox{ if } i+j\le n,\\
      (n-i)(n-j),& \mbox{ if }i+j\ge n.
    \end{cases}
  \end{align*}
  Furthermore, let $s_i:= \sum_{j=1}^{n-1} b_{ij}\nu_j$ denote the exponent of $y_i$ in the $I$ function. Then the $\vect{\nu}$ solve the system of equations
  \begin{align*}
    \Delta_k I_{\vect{\nu}}(\vect{y}) = \lambda_k \Delta_k I_{\vect{\nu}}(\vect{y}), \qquad \mbox{ for all } k = 1, \dots, n-1
  \end{align*}
  for a given point in the spectrum there are $L= n!$ such solutions, denoted $\vect{\nu}_1, \dots, \vect{\nu}_L$. For the discrete points in the exceptional spectrum (which are expected not to exist) $\vect{\lambda}^{(i)}$ we denote the associated $\vect{\nu}_j$ by $\vect{\nu}^{(i)}_j$. The exceptional spectrum lies in $[0,1/n]^{n-1}$ \cite{Goldfeld2006} and the continuous spectrum corresponds to points $\vect{\nu}$ with $\nu_j= 1/n+it_j$ for each $j =1 , \dots, n-1$. For convenience we define the following quantity
  \begin{align*}
    m_{\vect{T}}(\vect{\lambda}) = \sum_{j=1}^{L} I_{\vect{\nu}_j}(\vect{y}).
  \end{align*}

  Our goal is to understand the asymptotic behavior of the following horospherical average, let $F \in C^\infty_c(\Gamma \bk \cH)$ and let $\vect{T} \in \R^{n-1}_{>0}$
  \begin{align*}
    \cM_{\vect{T}}(F):= \int_{\Gamma_H\bk H}F(ha(1/\vect{T}))\rd h.
  \end{align*}
  For that purpose, let $I_{cont}(\vect{T}): = \prod_{i=1}^{n-1}T_i^{(b_{i1}+\dots + b_{i(n-1)})/n}$. The equidistribution theorem for $\SL_n(\R)/ \SO(n,\R)$ is the following

  \begin{theorem}\label{thm:SLnR}
    Suppose $\Gamma$ is as above. Let $F\in C_c^\infty(\Gamma \bk \cH)$, then there exist constants $c_0,c_1, \dots, c_k<\infty$, depending only on $\Gamma$ and $F$, such that
    \begin{align}
      \cM_{\vect{T}}(F) &= c_0 m_{\vect{T}}(0)  + c_1 m_{\vect{T}}(\vect{\lambda}^{(1)}) + \dots + c_k m_{\vect{T}}(\vect{\lambda}^{(k)})\\
      &\phantom{+++++}+ O \left( \left(I_{cont}(\vect{T}) \log T_1 \cdots \log T_{n-1}\right)^{\frac{2}{n+1}} \|F\|_\Gamma^{\frac{2}{n+1}} \|F\|_{1,\infty}^{\frac{n-1}{n+1}} \right),\notag
    \end{align}
    in the limit as $\prod_{i=1}^{n-1} T_i \to \infty$. In particular, if all $T_i \to \infty$ the main term is $C \int_{\Gamma \bk \cH} F(z) \rd z$ for some $C>0$.
  \end{theorem}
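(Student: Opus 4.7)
The plan is to follow the same spectral framework used for Theorems \ref{thm:main asymptotic} and \ref{thm:SL3R}, adapted to the more elaborate higher-rank setup. Concretely, I would first smooth the base-point in the $A$-direction: fix a small parameter $\epsilon=(\epsilon_1,\dots,\epsilon_{n-1})$ and a non-negative bump $\psi_\epsilon \in C_c^\infty(A)$ of unit mass supported in an $\epsilon$-box around the identity, and replace $\cM_{\vect T}(F)$ by the averaged quantity
\begin{align*}
\widetilde{\cM}_{\vect T}(F) := \int_{A}\psi_\epsilon(a)\int_{\Gamma_H\bk H}F\bigl(h\,a(1/\vect T)\,a\bigr)\rd h\,\rd a.
\end{align*}
Since $H$ and $A$ together give a local cross-section and $F$ is smooth, the cost of smoothing is controlled by $\|F\|_{1,\infty}$ times a power of $|\epsilon|$, so $\cM_{\vect T}(F)=\widetilde{\cM}_{\vect T}(F)+O(|\epsilon|\,\|F\|_{1,\infty})$. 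The smoothed object can now be viewed as the pairing of $F$ on $\Gamma\bk G$ with a test function of the form $\mathbf 1_{\Gamma_H\bk H}\otimes \psi_\epsilon$, which is the input for the spectral approach.

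Next, I would invoke the joint spectral decomposition of $L^2(\Gamma\bk G)$ with respect to the commuting Casimir operators $\Delta_1,\dots,\Delta_{n-1}$. Because $\Gamma$ is cofinite, this decomposition splits into the constant, the finite exceptional discrete spectrum at parameters $\vect\lambda^{(1)},\dots,\vect\lambda^{(k)}$, the tempered cusp spectrum, and the Eisenstein continuous spectrum. In each irreducible piece, $A$ acts by the $I$-function, so for a normalized eigenfunction $\phi$ with parameters $\vect\nu$ one gets
\begin{align*}
\int_A \psi_\epsilon(a)\,\phi\bigl(h\,a(1/\vect T)\,a\bigr)\rd a \;=\; \phi(h)\sum_{j=1}^{L}\widehat{\psi_\epsilon}(\vect\nu_j)\,I_{\vect\nu_j}(1/\vect T),
\end{align*}
after summing over Weyl-translates of $\vect\nu$. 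Restricting to the component of $\phi$ along $\mathbf 1_{\Gamma_H\bk H}$ (which is precisely the horospherical period appearing in the Burger–Roblin-type functional) gives the expression $m_{\vect T}(\vect\lambda^{(i)})$ up to a constant, and the contribution from $\widehat{\psi_\epsilon}(\vect\nu_j)=1+O(|\epsilon|\cdot|\vect\nu_j|)$ produces negligible corrections on the discrete part.

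The main work is then to bound the continuous-spectrum contribution. Using Plancherel for the Eisenstein series combined with Cauchy–Schwarz, one reduces to estimating
\begin{align*}
\Bigl(\int |\widehat{\psi_\epsilon}(\vect\nu)|^2\,\sum_j |I_{\vect\nu_j}(1/\vect T)|^2\,\rd\vect\nu\Bigr)^{1/2}\,\|F\|_\Gamma.
\end{align*}
On the tempered line $\vect\nu=1/n+i\vect t$, the modulus of $I_{\vect\nu_j}(1/\vect T)$ is exactly $I_{cont}(\vect T)^{-1}$, and $|\widehat{\psi_\epsilon}(\vect\nu)|$ decays rapidly off a box of size $1/|\epsilon|$, so the integral is of order $I_{cont}(\vect T)^{-1}(\log|\epsilon|^{-1})^{n-1}|\epsilon|^{-(n-1)/2}$ (the logs arising from the meromorphic continuation of the Eisenstein series and density of the cusp spectrum). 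This yields a spectral error
\begin{align*}
\widetilde{\cM}_{\vect T}(F) - \text{(main terms)} \;\ll\; I_{cont}(\vect T)^{-1}|\epsilon|^{-(n-1)/2}(\log|\epsilon|^{-1})^{(n-1)/2}\|F\|_\Gamma.
\end{align*}

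Finally, I would optimize $|\epsilon|$ against the smoothing error $|\epsilon|\,\|F\|_{1,\infty}$ by setting $|\epsilon|\sim(I_{cont}(\vect T)\|F\|_\Gamma/\|F\|_{1,\infty})^{2/(n+1)}$ (with a factor of $\log T_i$'s absorbed), which balances the two errors and produces exactly the stated bound. The hardest step will be the continuous-spectrum estimate: one must be careful both with the scattering/Maass–Selberg analysis for $\SL_n$ (which involves Eisenstein series induced from each proper parabolic) and with keeping track of all $L=n!$ Weyl-translates $\vect\nu_j$ so that the main terms assemble into $m_{\vect T}(\vect\lambda^{(i)})$ with the correct constants. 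The rest of the argument is largely bookkeeping carried over from the $\SL_3(\R)$ proof in Section \ref{s:SL3R}, with the notational cost of handling $n-1$ spectral parameters and an $n(n-1)/2$-dimensional horosphere.
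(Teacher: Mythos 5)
Your proposal is in the same broad spirit as the paper (smooth, spectrally decompose, Cauchy--Schwarz, optimize the smoothing parameter), but the middle of the argument is genuinely different from the route taken in Section \ref{s:SLnR}. The paper never appeals to the explicit Plancherel/Maass--Selberg theory for Eisenstein series. Instead it thickens with sharp (indicator) cutoffs in the $\vect y$-directions, proves the Structure Theorem so that each Casimir $\Delta_i$ descends to a differential operator $D_i$ in $\vect y$ after integrating over $\Gamma_H\bk H$, solves the resulting system of ODEs via Green's functions to express $\cM_\vep(\vect T)$ in terms of $L=n!$ reference averages $\cM_\vep(\vect b_i)$ up to $O(\|(\Delta_i-\lambda_i)F\|)$, and then upgrades this to the main identity $\Psi_{\vep,\vect T}=\sum_i K_{i,\vect T}(\vect\Delta)\Psi_{\vep,\vect b_i}$ holding almost everywhere. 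After this, the Parseval step needs only the \emph{abstract} spectral theorem together with the operator bounds $K_{i,\vect T}(\vect\lambda)\ll I_{cont}(\vect T)^{-1}\log T_1\cdots\log T_{n-1}$, and the $\vep^{-(n-1)/2}$ arises as $\|\Psi_{\vep,\vect 1}\|_\Gamma$ via Cauchy--Schwarz. In contrast, you go directly to the joint spectral decomposition and compute the $A$-transform $\widehat{\psi_\epsilon}(\vect\nu)$ explicitly, paying for the savings by needing the scattering/Maass--Selberg analysis for Eisenstein series from all proper parabolics of $\SL_n$ -- which you correctly flag as the hardest step. This is exactly the piece of machinery the paper's abstract approach is designed to bypass (and indeed the paper highlights this ``softness'' as its main selling point, distinguishing it from the Str\"ombergsson--S\"odergren--Edwards line). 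So your route is viable in principle, but it is closer to Edwards' method than to the one actually used here.

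One concrete inaccuracy worth flagging: the displayed identity
\begin{align*}
\int_A \psi_\epsilon(a)\,\phi\bigl(h\,a(1/\vect T)\,a\bigr)\,\rd a \;=\; \phi(h)\sum_{j=1}^{L}\widehat{\psi_\epsilon}(\vect\nu_j)\,I_{\vect\nu_j}(1/\vect T)
\end{align*}
is not correct as written. An automorphic eigenfunction $\phi$ does not factor as $\phi(h)$ times an $A$-dependent term; what \emph{is} a finite combination of $I$-functions is the horospherical period $\int_{\Gamma_H\bk H}\phi(ha(\vect y))\,\rd h$ (the cuspidal contributions vanish under this period, and the Eisenstein contributions produce the $I$-functions via their constant terms). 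Your next sentence, restricting to ``the component along $\mathbf 1_{\Gamma_H\bk H}$'', suggests you intend exactly this, but the intermediate formula should be stated for the period, not for $\phi$ itself. You should also be careful about the provenance of the logarithmic factors: in the paper they come from the degenerate $\vect\nu$ on the tempered line in the ODE solution (analogous to the $y^{n/2}\log y$ solution in rank one), not from ``meromorphic continuation of the Eisenstein series'', and whether such logs appear at all in your smooth-bump version of the argument needs to be checked rather than asserted.
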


  \section{Preliminaries and Notation -- Rank One}
  \label{s:Preliminaries}

  Given an $n \ge 1$ we work in the group $G:=\Isom(\half^{n+1})$. Throughout, let $\<\cdot, \cdot\>_\Gamma$ denote the $L^2(\Gamma \bk G)$ inner product with respect to the Haar measure $\rd g$. Let $\mathfrak{g}:= \Lie(G)$, note that $d:= \dim(\mathfrak{g}) = (n+1)(n+2)/2$. We have the following subgroups
  \begin{itemize}
  \item $A := \{ a_y \ : \ y>0 \}$ - a one dimensional diagonalizable subgroup.
  \item $H := \{ g\in G \ : \ a_y g a_{1/y} \to e \text{ as } y \to 0 \}$ - the expanding horospherical subgroup associated to $A$ -- $\dim(H) = n$.
  \item $K$ - the maximal compact subgroup -- $\dim(K) = (n+1)n/2 $.
  \item $M$ - the centralizer of $A$ in $K$ -- $\dim(M) = n-1 $.
  \end{itemize}
  Further, let $\Kbar := K/M$. Note that $\half^{n+1} = G/K$ and $T^1(\half^{n+1}) = G/M$. Throughout, we use $\Gamma_X$ to denote $\Gamma \cap X$.

\textbf{Lie algebras and the Casimir Operator:} In general, the Casimir operators generate the center of the universal enveloping algebra. The elements of the Lie algebra act like first order differential operators, and the Casimir operator acts as a second order differential operator on smooth functions on $G$. 
When restricted to right $K$-invariant smooth functions on $G$, 
the Casimir operator $\cC$ agrees with the hyperbolic Laplacian $\Delta$.

\textbf{Decomposition of $L^2(\G \bk G)$ into irreducibles:} The group $G$ acts  by the right-regular representation on the Hilbert space $\mathscr{H} := L^2(\Gamma\bk G)$ of square-integrable $\Gamma$-automorphic functions. 
The space $\mathscr{H}$ splits into components as follows:
\be\label{eq:cHdecomp}
\mathscr{H}=\mathscr{H}_0\oplus \mathscr{H}_1\oplus\cdots\oplus\mathscr{H}_k\oplus \mathscr{H}^{tempered}.
\ee
Each of the subspaces $\mathscr{H}_j$ denotes the $G$-span of the eigenfunction with eigenvalue $\lambda_j$. $\mathscr{H}^{tempered}$ denotes the tempered spectrum. Since we work with nice subgroups, there is no nonspherical complimentary series. For $n \ge 2$ the space $\mathscr{H}_j$ is infinite dimensional. However the subspace of $K$-fixed vectors in $\mathscr{H}_j$ is one dimensional.

\subsection{Abstract Spectral Theorem:}

The abstract spectral theorem is a remarkable theorem coming from abstract operator theory (see for example \cite[Ch. 13]{Rudin1973}). Since it represents the crucial input into our method we state it in full generality here. Let $L$ be a self-adjoint, positive semidefinite operator on the Hilbert space $\mathscr{H}$. In our applications $\mathscr{H}$ will be $L^2(\Gamma \bk G)$ and $L$ will be the Casimir operator (or Laplacian).

\begin{theorem}[Abstract Spectral Theorem]\label{thm:AST}
    There exists a spectral measure $\wt{\mu}$ on $\R$ and a unitary spectral operator $\wh{\phantom{\cdot\cdot}}: \mathscr{H} \to L^2([0,\infty),d \wt{\mu})$ such that:
    \begin{enumerate}[label = \roman*)]
    \item Abstract Parseval's Identity: for $\phi_1,\phi_2 \in \mathscr{H}$
        \begin{align}\label{API}
            \<\phi_1,\phi_2\>_{\mathscr{H}}=\<\wh{\phi_1},\wh{\phi_2}\>_{L^2([0,\infty), d\wt{\mu})};
        \end{align}
    \item The spectral operator is diagonal with respect to $L$: for $\phi\in \mathscr{H}$ and $\lambda \ge 0 $,
        \begin{align}
            \wh{L\phi}(\lambda) = \lambda \wh{\phi}(\lambda);
        \end{align}
    \end{enumerate}
\end{theorem}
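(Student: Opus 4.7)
The plan is to follow the standard path to the spectral theorem for (possibly unbounded) self-adjoint operators, as developed in Rudin's \emph{Functional Analysis}, Ch.\ 12--13. Since $L$ is unbounded in our applications, I would first reduce to a bounded problem. The cleanest way is to form the bounded resolvent $R := (I+L)^{-1}$, which is well-defined as a bounded self-adjoint operator with spectrum contained in $(0,1]$ precisely because $L$ is self-adjoint and positive semi-definite. Proving Theorem \ref{thm:AST} for $L$ then amounts to proving the bounded spectral theorem for $R$ and pulling the result back through the change of variables $\mu \mapsto \tfrac{1}{1+\mu}$ from $[0,\infty)$ onto $(0,1]$; the fact that $L = R^{-1} - I$ on $\operatorname{Dom}(L)$ means that any unitary diagonalizing $R$ automatically diagonalizes $L$, with the spectral parameter transformed accordingly.

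Next, to handle the bounded self-adjoint $R$, I would invoke the continuous functional calculus via Gelfand--Naimark applied to the commutative $C^*$-algebra generated by $R$ and $I$. This algebra is isometrically isomorphic to $C(\sigma(R))$, and composing the Gelfand map with the vector state $\phi \mapsto \langle f(R)\phi,\phi\rangle_{\mathscr{H}}$ (for $\phi\in \mathscr{H}$) gives, via Riesz--Markov, a Borel measure $\mu_\phi$ on $\sigma(R)$. The map $f \mapsto f(R)\phi$ then extends to a unitary from $L^2(\sigma(R),d\mu_\phi)$ onto the cyclic subspace generated by $\phi$, on which $R$ is transported to multiplication by $\mu$.

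To get a global unitary, I would use a standard Zorn's lemma argument to decompose $\mathscr{H}$ into an orthogonal direct sum of cyclic subspaces $\mathscr{H} = \bigoplus_\alpha \mathscr{H}_{\phi_\alpha}$ under $R$. Each summand is unitarily equivalent to $L^2(\sigma(R), d\mu_{\phi_\alpha})$ via the construction above. To assemble these into a single $L^2([0,\infty),d\widetilde\mu)$ as the statement demands, I would let $\widetilde\mu$ be the measure with multiplicity obtained by disjointly uniting the $\mu_{\phi_\alpha}$ after pulling them back from $\sigma(R) \subset (0,1]$ to $[0,\infty)$ under $\lambda = \tfrac{1}{\mu}-1$; equivalently, one works with the direct-integral realization, which for our purposes we may write as $L^2$ of a measure on $[0,\infty)$ that encodes the spectral multiplicity. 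Parseval's identity (i) is automatic from the unitarity of each piece, and the diagonalization property (ii) for $L$ follows because multiplication by $\mu$ on the $R$ side becomes multiplication by $\lambda = \tfrac{1}{\mu}-1$ on the $L$ side.

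The main conceptual obstacle is the cyclic decomposition step and the unification of the resulting measures into a \emph{single} $L^2([0,\infty),d\widetilde\mu)$: in general $L$ has infinite spectral multiplicity (indeed, as the paper notes, for $n\ge 2$ each eigenspace $\mathscr{H}_j$ is infinite dimensional), and the reader should interpret $d\widetilde\mu$ as incorporating this multiplicity. Once that bookkeeping is in place, the remaining verifications (self-adjointness of the extension, domain questions for $L$, and the passage from the bounded functional calculus for $R$ to the unbounded functional calculus for $L$) are routine and are carried out in detail in \cite{Rudin1973}.
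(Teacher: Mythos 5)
The paper does not prove this theorem itself; it cites it as a standard result from abstract operator theory (Rudin, \emph{Functional Analysis}, Ch.\ 13). Your proposal reproduces the standard argument from that reference (resolvent reduction to a bounded operator, Gelfand--Naimark and the Riesz--Markov representation to build cyclic-subspace measures, Zorn for the cyclic decomposition, then the change of variables $\lambda = \mu^{-1}-1$), and you correctly flag the multiplicity bookkeeping needed to phrase the target space as a single $L^2([0,\infty),d\widetilde\mu)$ — so this is correct and in line with the source the paper relies on.
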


If $\lambda$ is in the point specturm of $L$ with associated
eigenspace $\mathscr{H}_\gl$, then for any $\psi_1,\psi_2\in \mathscr{H}$ one has
        \begin{align}\label{proj}
            \wh{\psi_1}(\lambda)\wh{\bar\psi_2}(\lambda) 
            =
            \<\operatorname{Proj}_{\mathscr{H}_\gl}\psi_1,
            \operatorname{Proj}_{\mathscr{H}_\gl}\psi_2\>,
        \end{align}
where $\operatorname{Proj}$ refers to the projection to the subspace $\mathscr{H}_\gl$. In the special case that $\mathscr{H}_\gl$ is one-dimensional and spanned by the normalized eigenfunction $\phi_\gl,$  we have that
        \begin{align}
            \wh{\psi_1}(\lambda)\wh{\bar\psi_2}(\lambda) 
            =
            \<\psi_1,\phi_\gl\>
            \<\phi_\gl,\psi_2\>.
        \end{align}

    \section{Expanding Horospheres in $\half^{1+1}$}
    \label{s:n=1}

  Let $\Gamma<\SL_2(\R)$ and assume $\delta>1/2$. Suppose the discrete spectrum of $\Gamma$ has $k$ many values above the base, that is, there exist eigenvalues $\delta(1-\delta) = \lambda_0 < \lambda_1 < \dots < \lambda_k < \frac{1}{4}$. Further, write $\lambda_k= s_k(1-s_k)$.  The purpose of this section is to prove the following equidistribution theorem, which is the $n=1$ case of Theorem \ref{thm:main asymptotic} for $K$-invariant functions.  

  \begin{theorem}[Horocyclic Equidistribution]\label{thm:fin vol}
     Suppose $\Gamma_H \bk H$ is closed. Let $F \in C_c^\infty(\Gamma \bk \half)$, then the horocyclic average satisfies
    \begin{align}\label{fin vol}
      \cM_T(F) = c_0 T^{\delta-1} + c_1 T^{s_1-1} + \dots + c_k T^{s_k-1} + O(T^{\delta/3-2/3} (\log T)^{2/3})
    \end{align}
    where $c_i$ depend only on the group $\Gamma$ and $F$, for all $i =0, 1 , \dots, k$ .

  \end{theorem}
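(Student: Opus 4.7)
The plan is to adapt the Lax--Phillips abstract-spectral counting method to this horocyclic setting. Fix a small smoothing parameter $\epsilon>0$ (to be optimized at the end) and construct a ``thickened horocycle'' test function as follows. Let $F_H$ be a compact fundamental domain for $\Gamma_H\bk H$ (compact by hypothesis) and let $\alpha_\epsilon,\beta_\epsilon$ be non-negative bumps of unit integral supported in $\epsilon$-balls around the identity in $A$ and $K$, respectively. In local $H\cdot a_{1/T}\cdot A\cdot K$-coordinates near the horocycle, define
\begin{align*}
\Psi_{T,\epsilon}(h\,a_{1/T}\,a'\,k) := \mathbf{1}_{F_H}(h)\,\alpha_\epsilon(a')\,\beta_\epsilon(k),
\end{align*}
and let $\cP\Psi_{T,\epsilon}(\Gamma g) := \sum_{\gamma\in\Gamma}\Psi_{T,\epsilon}(\gamma g)$ denote its $\Gamma$-automorphization. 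Unfolding the pairing $\<F,\cP\Psi_{T,\epsilon}\>_\Gamma$ and invoking the $K$-invariance of $F$ together with a Lipschitz estimate on $F$ along the $A$-direction, one obtains
\begin{align*}
\<F,\cP\Psi_{T,\epsilon}\>_\Gamma = J(T)\,\cM_T(F) + O\!\left(\epsilon\,\|F\|_{1,\infty}\,J(T)\,T^{\delta-1}\right),
\end{align*}
where $J(T)$ is an explicit Haar-measure Jacobian coming from the change of variables and the $T^{\delta-1}$ factor reflects the a priori Patterson--Sullivan upper bound for $|\cM_T(F)|$.

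Next apply the abstract spectral theorem (Theorem~\ref{thm:AST}) with $L = \Delta$ to this pairing. Since $F$ is right $K$-invariant, its projection onto each discrete eigenspace $\mathscr{H}_j$ lies in the one-dimensional $K$-fixed subspace spanned by the normalized eigenfunction $\phi_j$, so the abstract Parseval identity~\eqref{API} yields
\begin{align*}
\<F,\cP\Psi_{T,\epsilon}\>_\Gamma = \sum_{j=0}^{k}\<F,\phi_j\>_\Gamma\,\overline{\<\cP\Psi_{T,\epsilon},\phi_j\>_\Gamma} + \cE_{\mathrm{cont}}(T,\epsilon),
\end{align*}
with $\cE_{\mathrm{cont}}$ collecting the tempered spectrum contribution. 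For each discrete term, unfold $\<\cP\Psi_{T,\epsilon},\phi_j\>_\Gamma$ back to an integral on $G$ and use the standard ``constant term'' asymptotic for a square-integrable $K$-invariant eigenfunction along the expanding closed horocycle at height $1/T$ (the Patterson--Sullivan asymptotic in the base case $j=0$), giving
\begin{align*}
\<\cP\Psi_{T,\epsilon},\phi_j\>_\Gamma = J(T)\,\wt c_j\,T^{s_j-1} + O(J(T)\,\epsilon\,T^{s_j-1});
\end{align*}
after dividing by $J(T)$ this produces the main terms $c_jT^{s_j-1}$.

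The core of the argument is the estimate on $\cE_{\mathrm{cont}}$. By the abstract Parseval identity and Cauchy--Schwarz,
\begin{align*}
|\cE_{\mathrm{cont}}(T,\epsilon)| \le \|F\|_\Gamma \cdot \|\mathrm{Proj}_{\mathrm{cont}}\,\cP\Psi_{T,\epsilon}\|_\Gamma,
\end{align*}
and the key input---analogous to the tempered matrix-coefficient decay used in Lax--Phillips~\cite{LaxPhillips1982}---is a sharp $L^2$ estimate of the form $\|\mathrm{Proj}_{\mathrm{cont}}\,\cP\Psi_{T,\epsilon}\|_\Gamma \lesssim J(T)\,\epsilon^{-1/2}\,T^{-1/2}\,\log T$, in which $T^{-1/2}$ reflects the base $s=1/2$ of the continuous spectrum, $\epsilon^{-1/2}$ arises from the $L^2$ size of the bump, and the $\log T$ comes from integrating over the tempered parameter. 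Collecting everything and dividing through by $J(T)$ gives
\begin{align*}
\cM_T(F) = \sum_{j=0}^k c_j\,T^{s_j-1} + O\!\left(\epsilon\,\|F\|_{1,\infty}\,T^{\delta-1}\right) + O\!\left(\|F\|_\Gamma\,\epsilon^{-1/2}\,T^{-1/2}\,\log T\right).
\end{align*}
Optimizing with the choice $\epsilon\sim T^{(1-2\delta)/3}(\log T)^{2/3}$ balances the two error terms and yields the claimed rate $T^{\delta/3-2/3}(\log T)^{2/3}$. The main obstacle is establishing the sharp $L^2$ bound on the tempered projection; once that spectral estimate is in hand, the remainder is essentially bookkeeping.
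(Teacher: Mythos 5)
Your outline correctly mirrors the skeleton of the argument (thickened test function, unfolding, abstract Parseval, Cauchy--Schwarz on the tempered piece, then optimize $\epsilon$), but it leaves a genuine gap precisely at the step you flag as the ``key input.'' You assert an $L^2$ bound of the form
\begin{align*}
\|\mathrm{Proj}_{\mathrm{cont}}\,\cP\Psi_{T,\epsilon}\|_\Gamma \ \lesssim\ J(T)\,\epsilon^{-1/2}\,T^{-1/2}\,\log T,
\end{align*}
attributing it to ``tempered matrix-coefficient decay \`a la Lax--Phillips.'' That attribution is off: Lax--Phillips do \emph{not} invoke matrix-coefficient decay (that is the dynamical/mixing route the paper is explicitly offering an alternative to). The whole point of the abstract spectral method is that such a bound on the $T$-dependent object $\Psi_{\epsilon,T}$ is never estimated directly; it is bootstrapped from a structural identity. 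Without that identity your proposal is circular: the $T^{-1/2}\log T$ decay of the tempered projection of $\Psi_{\epsilon,T}$ is essentially equivalent in strength to the theorem you are trying to prove, and nothing in your sketch shows how to get it.

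The missing ingredient is the ``Main Identity'' (the paper's Theorem~\ref{thm:main ident finite}): one unfolds to an ODE $y^2 f'' - \lambda f = g$ for $f(y)=\int F(x+iy)\,dx$, writes the solution as $f = Ay^s + By^{1-s} + (\text{particular part})$, integrates against $\psi_{\epsilon,T}$ to get $\cM_\epsilon(T,F) = A\alpha(T)+B\beta(T)+O(\|(\Delta-\lambda)F\|)$, and then eliminates $A,B$ by evaluating at two fixed times $T=1,b$. Promoting the resulting scalar relation to an operator identity
\begin{align*}
\Psi_{\epsilon,T} = K_T(\Delta)\Psi_{\epsilon,1} + L_T(\Delta)\Psi_{\epsilon,b}
\end{align*}
is what makes everything work: on the tempered line $K_T(\lambda),L_T(\lambda)\ll T^{-1/2}\log T$ is an elementary computation, and $\|\Psi_{\epsilon,1}\|_\Gamma\ll\epsilon^{-1/2}$ is trivial since the height is now fixed. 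Your $T^{-1/2}\epsilon^{-1/2}\log T$ then drops out for free. As written, your proof replaces the one nontrivial lemma with an unproved assertion, so while the bookkeeping and the final optimization $\epsilon\sim T^{(1-2\delta)/3}(\log T)^{2/3}$ are fine, the argument does not actually close. (Minor point: in the $n=1$, $K$-invariant case there is no need for the additional $K$-bump $\beta_\epsilon$; smoothing in $y$ alone suffices.)
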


  \begin{remark}
    For the modular surface we recover a weaker version of Sarnak and Zagier's results \cite[Theorem 1]{Sarnak1981} \cite{Zagier1981} which do not rely on any smoothing procedure, but exploit the exact form of the modular surface.
  \end{remark}

  If $\Gamma_H \bk H$ is closed then we can assume the cusp at $\infty$ has width $[-X,X]$. If not, then since $F$ has compact support, we can assume the support is contained in $x \in [-X,X]$.  Then the average becomes
  \begin{align*}
    \int_{-X}^X F(x+i/T) \rd x  
  \end{align*}
  where $\cF$ is a fundamental domain for $\Gamma$. 

  Let
  \begin{align*}
    \psi_{T,\vep} (y) : = \frac{1}{2\vep} \one(y \in [1/T-\vep/T^2,1/T+\vep/T^2])
  \end{align*}
  be the $L^1$ normalized indicator function.
  We write $\psi_{\vep,T}(z) = \psi_{\vep,T}(y)$ for $z=x+iy$. Further, let $\Psi_{\vep,T}:\Gamma \bk \half \to \R$ denote the automorphization of $\psi_{\vep,T}$, that is
  \begin{align*}
    \Psi_{\vep,T}(x+iy): = \sum_{\gamma \in \Gamma_\infty \bk \Gamma} \psi_{\vep,T}(\gamma z ),
  \end{align*}
  where $\Gamma_\infty$ could be trivial.
  Now define the $\vep$-thickened average to be
  \begin{align*}
    \cM_{\vep}(T,F) = \cM_\vep (T) := \int_{\cF} F(x+iy ) \Psi_{\vep,T}(x+iy ) \frac{\rd x \rd y}{y^2}.
  \end{align*}

  \subsection{Differential Equation}
  Now by unfolding we arrive at
  \begin{align*}
    \cM_\vep (T) &= \< F ,  \Psi_{\vep,T} \>_{\Gamma \bk \half}\\
    &= \int_{\cF} F(z) \sum_{\gamma \in  \Gamma_\infty \bk \Gamma} \psi_{\vep, T} (\gamma z)  \rd z\\
    &= \int_{\Gamma_\infty \bk \half} F(z) \psi_{\vep,T}(z) \rd z\\
    &= \int_{\R} \psi_{\vep,T}(y) \int_{-X}^X F(x+iy)  \frac{\rd x \rd y}{y^2}.
  \end{align*}
  Let $f(y): =  \int_{-X}^X F(x+iy) \rd x $, fix $\lambda=s(1-s) >0$ and let $ g(y) :=  y^2 \partial_{yy} f(y) - \lambda f(y)$. Then $f$ satisfies the differential equation
  \begin{align*}
    y^2\partial_{yy} f(y) - \lambda f(y) = g(y) = \int_{-X}^{X} (\Delta-\lambda)F \rd x.
  \end{align*}
  Hence $f$ satisfies \cite[Lemma B.1]{Kontorovich2009}. That is, if $\lambda \neq 1/4$ there exist constants $A$, $B$ and functions $u$, $v$ such that
  \begin{align}
    f(y) = A y^s + B y^{1-s} + u(y) y^s + v(y) y^{1-s}.
  \end{align}
  When $\lambda = 1/4$, we may similarly conclude that
  \begin{align}
    f(y) = A y^{1/2} + B y^{1/2}\log y + u(y) y^{1/2} + v(y) y^{1/2}\log y.
  \end{align}
  For simplicity we henceforth assume that $\lambda \neq 1/4$ (if $\lambda=1/4$ the same argument can be applied -- see \cite{Kontorovich2009}). In this case we have 
  \begin{align*}
    u(y) = (1-2s)^{-1} \int_{\frac{1}{T} - \frac{\vep}{T^2}}^y w^{-1-s} g(w) \rd w,
    \qquad
    v(y) = (2s-1)^{-1} \int_{\frac{1}{T} - \frac{\vep}{T^2}}^y w^{s-2} g(w) \rd w.
  \end{align*}

  Now we integrate $f(y)$ against $\frac{\psi_{\vep,T}}{y^2}$. First, for the main term we can apply a power series expansion around $1/T$ to conclude
  \begin{align*}
    \int_{0}^{\infty} \psi_{\vep,T}(y) (A y^{s} + B y^{1-s}) \frac{\rd y}{y^2} = A \alpha(T) + B \beta(T),
  \end{align*}
  where
  \begin{gather}
    \begin{gathered} \label{alph beta bounds}
    \alpha(T) : = \int_0^\infty\psi_{\vep,T}(y) y^s \frac{\rd y}{y^2} = \vep^{-1}\left((\frac{1}{T}-\frac{\vep}{T^2})^{s-1} - (\frac{1}{T}+\frac{\vep}{T^2})^{s-1}\right)   =  T^{-s} + O(\vep T^{-(s+1)}),\\
    \beta(T) :  = \int_0^\infty\psi_{\vep,T}(y) y^{1-s} \frac{\rd y}{y^2} = \vep^{-1}\left((\frac{1}{T}-\frac{\vep}{T^2})^{-s} - (\frac{1}{T}+\frac{\vep}{T^2})^{-s}\right)   =  T^{s-1} + O(\vep T^{s-2}).
    \end{gathered}
  \end{gather}

  As for the contribution from $u$, let
  \begin{align*}
    I = \int_{0}^\infty \psi_{\vep,T}(y) y^s u(y) \frac{\rd y}{y^2},
  \end{align*}
  then, by  integrating by parts we have (with $y_{\min}= 1/T - \vep/T^2$ and $y_{\max} = 1/T + \vep/T^2$)
  \begin{align*}
    I = c \int_{y_{\min}}^{y_{\max}}  g(y) \frac{\rd y}{y^2} + cy_{\max}^{s-1} \int_{y_{\min}}^{y_{\max}} y^{-1-s} g(y) \rd y.
  \end{align*}
  Now apply Cauchy-Schwarz, yielding
  \begin{align*}
    I &\ll_{\lambda, T,\vep} \int_{y_{\min}}^{y_{\max}}\int_{-X}^X   y^{-s} \left(\frac{(\Delta - \lambda) F(z)}{y}\right) \rd x\rd y,\\
    &\ll_{\lambda,T,\vep} \left(\int_{y_{\min}}^{y_{\max}}\int_{-X}^X \abs{ y^{-s}}^2\rd x\rd y\right)^{1/2}\left(\int_{y_{\min}}^{y_{\max}}\int_{-X}^X  \abs{\frac{(\Delta - \lambda) F(z)}{y}}^2 \rd x\rd y\right)^{1/2},\\
    &\ll_{\lambda,T,\vep} \|(\Delta - \lambda)F \|_{\Gamma}.
  \end{align*}
  The same bound can be derived for the term coming from $v(y)$. Hence we conclude that for any $F$, we have
  \begin{align} \label{M exp}
    \cM_\vep (T,F) = A \alpha(T) + B \beta(T) +O(\|(\Delta -\lambda)F\|).
  \end{align}

  \subsection{Inserting the Laplacian}

  Now suppose $F$ \emph{were} an eigenfunction of $\Delta$ with eigenvalue $\lambda$. Then the final term in \eqref{M exp} vanishes.  Plugging in two values $T=1$ and $T=b$, we can solve for $A$ and $B$. A short calculation shows (for $\lambda \neq 1/4$)
  \begin{align}\label{M eigen}
    \cM_{\vep}(T,F) =  K_T(s)\cM_{\vep}(1,F) + L_T(s)\cM_{\vep}(b,F),
  \end{align}
  where
  \begin{align}\label{KL def}
    K_T(s) := \frac{\beta(b)\alpha(T) - \alpha(b)\beta(T)}{\alpha(1)\beta(b)-\alpha(b)\beta(1)},
    \qquad
    L_T(s) := \frac{\alpha(1)\beta(T) - \beta(1)\alpha(T)}{\alpha(1)\beta(b)-\alpha(b)\beta(1)}.
  \end{align}
  Note that since $s \in (1/2,1]$ we have $K_T(s),L_T(s) \asymp T^{s-1}$. Moreover, when $\lambda \ge 1/4$ then we have $K_T(1/2),L_T(1/2) \ll T^{-1/2}\log T$.

  We abuse notation and write $K_T(\lambda)=K_T(s)$ where $\lambda = s(1-s)$ and likewise for $L_T$. Furthermore, just as one can define the exponential of a matrix, we let $K_T(\Delta)$ and $L_T(\Delta)$ be the analogous operators for $K_T(s)$ and $L_T(s)$ respectively. The following theorem states that \eqref{M eigen} holds even if $F$ is not an eigenfunction of the Laplacian.

 \begin{theorem}[Main Identity]\label{thm:main ident finite}
   For fixed $T \ge 1$, there exists a number, $b$, such that
   \begin{align}\label{Psi main}
     \Psi_{\vep,T} = K_T(\Delta)\Psi_{\vep,1} + L_T(\Delta)\Psi_{\vep,b}
   \end{align}
   holds almost everywhere. Moreover $K_T $ and $L_T$ satisfy the bounds
   \begin{align}\label{KL bounds}
     K_T(s),L_T(s) \ll
     \begin{cases}
       T^{s-1} & \mbox{ if } s \in (1/2,1]\\
       T^{-1/2}\log T & \mbox{ if } s=1/2+it.
     \end{cases}
   \end{align}
   
 \end{theorem}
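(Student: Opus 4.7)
The plan is to use the Abstract Spectral Theorem to lift the identity \eqref{M eigen}, which has been established whenever $F$ is an eigenfunction of $\Delta$, to an identity between the three functions $\Psi_{\vep,T}$, $\Psi_{\vep,1}$, $\Psi_{\vep,b}$ themselves. The key observation is that the error term $O(\|(\Delta-\lambda)F\|)$ appearing in \eqref{M exp} vanishes identically when $F$ lies in the $\lambda$-eigenspace of $\Delta$, so \eqref{M eigen} is exact on each discrete eigenspace; the same derivation applied to a generalized eigenfunction (an Eisenstein series) yields the analogous exact identity at every point of the continuous spectrum.

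By Theorem \ref{thm:AST}(ii) the operators $K_T(\Delta)$ and $L_T(\Delta)$ act diagonally on the spectral side, so applying Abstract Parseval \eqref{API} reduces \eqref{Psi main} to the pointwise spectral identity
\[
\wh{\Psi_{\vep,T}}(\lambda) \;=\; K_T(\lambda)\wh{\Psi_{\vep,1}}(\lambda) + L_T(\lambda)\wh{\Psi_{\vep,b}}(\lambda)
\]
for $\wt{\mu}$-a.e.\ $\lambda$. To verify this on the $j$-th discrete eigenspace, I would pair both sides against the (up to scaling unique) $K$-fixed eigenfunction $\phi_j$ spanning the relevant subspace of $\mathscr{H}_j$ and combine \eqref{proj} with the exact version of \eqref{M eigen} applied to $F=\phi_j$; the result is the displayed identity at $\lambda=\lambda_j$. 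For the tempered part, the same pairing against an Eisenstein series $E_s(z)$, a generalized eigenfunction with eigenvalue $s(1-s)$ for $s=1/2+it$, produces the identity at each point of the continuous spectrum.

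The bounds \eqref{KL bounds} then follow by direct computation from \eqref{KL def} and \eqref{alph beta bounds}. For $s\in(1/2,1]$, $\alpha(T)\asymp T^{-s}$ and $\beta(T)\asymp T^{s-1}$; since $s-1>-s$ the dominant contribution to the numerator is $b^{-s}T^{s-1}$, and a routine simplification gives $K_T(s), L_T(s)\ll T^{s-1}$ provided $b$ is chosen bounded away from $1$. For $s=1/2+it$ a short calculation yields
\[
K_T(s) \;=\; T^{-1/2}\,\frac{\sin\bigl(t\log(b/T)\bigr)}{\sin\bigl(t\log b\bigr)} + (\text{lower order}),
\]
with an analogous formula for $L_T(s)$. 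Taking any fixed $b\ge 2$, the quotient is controlled by $O(\log T)$ using $\sin x\asymp x$ in the regime $|t\log b|\ll 1$ together with the elementary bound $|\sin x|\ge \tfrac{2}{\pi}|x|$ for $|x|\le \pi/2$; the $\log T$ factor is produced entirely by the small-$t$ regime, where the ratio behaves like $\log(T/b)/\log b$.

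The main obstacle is the second step: passing cleanly from the discrete to the continuous spectrum while reconciling the three different ``bases'' in play (Laplacian eigenfunctions, Eisenstein series, and the abstract diagonalizing transform $\wh{\cdot}$ of Theorem \ref{thm:AST}). One must also verify that the isolated singularities of $1/\sin(t\log b)$ at $t\log b\in\pi\Z\setminus\{0\}$ form a $\wt{\mu}$-null set, so that the uniform bounds \eqref{KL bounds} are valid $\wt{\mu}$-a.e.\ and hence that \eqref{Psi main} holds as a genuine pointwise-a.e.\ identity of $L^2$ functions on $\Gamma\bk\half$.
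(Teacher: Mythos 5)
Your proposal diverges from the paper's proof in its treatment of the continuous spectrum, and this is where a genuine gap appears. The paper (following \cite[Proof of Theorem 3.2]{Kontorovich2009}, and sketched more fully in the $\SL_3$ version of this identity later in the paper) defines the difference function $G_T := \Psi_{\vep,T} - K_T(\Delta)\Psi_{\vep,1} - L_T(\Delta)\Psi_{\vep,b}$, observes that \eqref{M exp} gives $\langle G_T, F\rangle \ll \|(\Delta-\lambda)F\|$ for \emph{every} $\lambda$ and every test $F$, and then chooses $F$ whose spectral transform $\wh F$ is supported in a small ball $B_\sigma(\lambda)$ in the spectrum. Parseval and the diagonality of $\Delta$ then force $\int_{B_\sigma(\lambda)}|\wh{G_T}|^2\,d\wt\mu \ll \sigma^2$ for all $\lambda$ and $\sigma$, from which $\wh{G_T}=0$ $\wt\mu$-a.e.\ and hence $G_T=0$ a.e. This argument is purely operator-theoretic: it never names the continuous spectrum's ``eigenfunctions.''

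Your approach handles the discrete eigenvalues essentially the same way (pairing against $\phi_j$ and invoking \eqref{proj} and \eqref{M eigen}, with the caveat that \eqref{M eigen} was derived for compactly supported $F$ and one must check it extends to $\phi_j$, which decays but is not compactly supported). But for the tempered part you propose pairing against Eisenstein series $E_{1/2+it}$. This fails in the generality Theorem \ref{thm:main ident finite} is stated: $\Gamma$ is only assumed geometrically finite with $\delta>1/2$, so the theorem covers infinite-area surfaces ($1/2<\delta<1$) where the continuous spectrum on $[1/4,\infty)$ has no Eisenstein series parametrization and no classical spectral expansion in terms of automorphic generalized eigenfunctions. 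That is precisely why the paper invokes the abstract spectral theorem in the first place. Even in the lattice case, pairing an $L^2$ identity against a non-$L^2$ object would need regularization, and one would separately have to argue that eigenfunctions plus Eisenstein series are spectrally complete before concluding \eqref{Psi main} holds a.e. So your second step cannot be repaired within the scope of the theorem; you need the localization argument (or something equivalent) to treat the tempered part abstractly.

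Your computation of the bounds \eqref{KL bounds} is essentially right, and your concern about the poles of $1/\sin(t\log b)$ at $t\log b\in\pi\Z\setminus\{0\}$ is a legitimate subtlety in \cite{Kontorovich2009}. It is resolved because the spectral measure $\wt\mu$ restricted to the tempered spectrum is absolutely continuous, so a discrete set of $t$-values is $\wt\mu$-null and the a.e.\ bound suffices for every subsequent use of $K_T(\Delta)$ and $L_T(\Delta)$ under the integral against $d\wt\mu$.
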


 \begin{proof}
   The proof follows an almost verbatim application of \cite[Proof of Theorem 3.2]{Kontorovich2009}.
 \end{proof}

 \subsection{Proof of Thickened Equidistribution}

 To simplify notation, assume there are no eigenvalues other than the base eigenvalue $\lambda_0$. Note that by applying the abstract Parseval's identity
 \begin{align*}
   \cM_{\vep}(T) &= \< \Psi_{\vep,T}, F\>_\Gamma\\
   &= \< \wh{\Psi_{\vep,T}}, \wh{F} \>_{\Spec(\Gamma)}\\
   &= \wh{F}(\lambda_0) \wh{\Psi_{\vep,T}}(\lambda_0) + \int_{\Spec(\Gamma)\setminus\{\lambda_0\}} \wh{F}(\lambda) \wh{\Psi_{\vep,T}}(\lambda) \rd\wt{\mu}(\lambda).
 \end{align*}
 Using Patterson-Sullivan theory we know what the base eigenfunction is, and thus, since $F$ is compactly supported, and thus integrable with respect to the Burger-Roblin measure we conclude that 
 \begin{align*}
   \wh{F}(\lambda_0) : = \< F, \phi_0\>
 \end{align*}
 is some finite, explicit constant. Furthermore, using Theorem \ref{thm:main ident finite} and \eqref{alph beta bounds} we have
 \begin{align*}
   \wh{\Psi_{\vep,T}}(\lambda_0) = T^{\delta-1}(c_1 \<\Psi_{\vep,1},\phi_0\>+c_2 \<\Psi_{\vep,b},\phi_0\>) + O(\vep T^{-\delta}).
 \end{align*}
 Hence, since $\Psi_{\vep,1}$ can be unfolded, and $\psi_{\vep,1}$ has unit mass, we can use the mean value theorem (as done in \cite[(4.16)]{Kontorovich2009}) to conclude
 \begin{align*}
   \<\Psi_{\vep,1},\phi_0\> =  C(1+O(\vep)).
 \end{align*}
 From which it follows that
 \begin{align*}
   \wh{F}(\lambda_0) \wh{\Psi_{\vep,T}}(\lambda_0) = C_{\Gamma,F} T^{\delta-1} (1+ O(\vep)).
 \end{align*}
 The lower order contribution from any exceptional eigenvalues can be similarly estimated to give $c_i T^{s_i-1} (1+ O(\vep))$.

 As for the error term, we may apply Theorem \ref{thm:main ident finite}, \emph{ii)} from the abstract spectral theorem (Theorem \ref{thm:AST}) and the estimate \eqref{KL bounds}, yielding
 \begin{align*}
   \int_{\Spec(\Gamma)\setminus\{\lambda_0\}} \wh{F}(\lambda) \wh{\Psi_{\vep,T}}(\lambda) \rd\wt{\mu}(\lambda)
   =
   \int_{\Spec(\Gamma)\setminus\{\lambda_0\}} \wh{F}(\lambda) (\wh{K_T(\Delta)\Psi_{\vep,1}}(\lambda) + \wh{L_T(\Delta)\Psi_{\vep,b}}(\lambda) \rd\wt{\mu}(\lambda)\\
   =
   \int_{\Spec(\Gamma)\setminus\{\lambda_0\}} \wh{F}(\lambda)( K_T(\lambda)\wh{\Psi_{\vep,1}}(\lambda) + L_T(\lambda)\wh{\Psi_{\vep,b}}(\lambda) \rd\wt{\mu}(\lambda)\\
   \ll
   T^{- 1/2} \log T \int_{\Spec(\Gamma)\setminus\{\lambda_0\}} \wh{F}(\lambda)(\wh{\Psi_{\vep,1}}(\lambda) +\wh{\Psi_{\vep,b}}(\lambda)) \rd\wt{\mu}(\lambda).
  \end{align*}
  To conclude we apply the abstract Parseval's identity and Cauchy-Schwarz giving
  \begin{align*}
    \int_{\Spec(\Gamma)\setminus\{\lambda_0\}} \wh{F}(\lambda) \wh{\Psi_{\vep,T}}(\lambda) \rd\wt{\mu}(\lambda)
    &\ll T^{- 1/2} \log T \|F\|_{\Gamma} \| \Psi_{\vep,1}\|_\Gamma.
  \end{align*}
  This is the source of the $\|F\|_{\Gamma}$ in the error term. As for the second factor we have
  \begin{align*}
    \| \Psi_{\vep,1}\|_\Gamma \ll \frac{1}{\sqrt{\vep}}.
  \end{align*}
  Leading to the following 'thickened' version of the equidistribution result
  \begin{align}\label{thick}
    \cM_{\vep}(T,F) = c_0 T^{\delta-1} + c_1 T^{s-1} + \dots c_k T^{s_k-1} + O(\vep^{-1/2}T^{-1/2} \log T),
  \end{align}
  where $c_i$ depend on the group, $\vep$, and on $F$, moreover, for each $i$ we can write $c_i = C_i(1+O(\vep))$, where $C_i$ is independent of $\vep$. 

  \subsection{Proof of Theorem \ref{thm:fin vol}}

    By the unit mass and positivity of $\Psi$,  we have that, after unfolding
  \begin{align*}
    \abs{\cM(T) - \cM_\vep(T)}  = \abs{\int_{1/T-\vep/T^2}^{1/T+\vep/T^2} \int_{-X}^X (F(x+i/T) - F(z)) \psi_{\vep,T}(z) \frac{\rd x \rd y}{y^2}}.
  \end{align*}
  Now apply Cauchy-Schwarz
  \begin{align*}
    &\ll \left(\int_{1/T-\vep/T^2}^{1/T+\vep/T^2} \int_{-X}^X (F(x+i/T) - F(z))^2 \rd z\right)^{1/2} \left(\int_{1/T-\vep/T^2}^{1/T+\vep/T^2} \int_{-X}^X \psi_{\vep,T}(z)^2 \rd z \right)^{1/2},\\
    &\ll \frac{1}{\sqrt{\vep}} \left(\int_{1/T-\vep/T^2}^{1/T+\vep/T^2} \int_{-X}^X (F(x+i/T) - F(z))^2 \rd z\right)^{1/2}.
  \end{align*}
  Finally apply the mean value theorem and the fact that $F$ is assumed to be compactly supported to conclude
  \begin{align}\label{M Meps}
    \begin{aligned}
      \abs{\cM(T) - \cM_\vep(T)}  &\ll \frac{\|F\|_{1,\infty}}{\sqrt{\vep}} \left(\vep^2 T^{2(\delta-1)} \int_{1/T-\vep/T^2}^{1/T+\vep/T^2} \int_{-X}^X 1 \rd z\right)^{1/2}\\
      &\ll \vep T^{\delta-1}\|F\|_{1,\infty},
      \end{aligned}
  \end{align}
  the factor of $T^{\delta-1}$ comes from the fact that $F$ is compactly supported in $\Gamma \bk \half$. Namely, since $F$ has compact support, the only fundamental domains which intersect a small neighbor hood around the horosphere of height $1/T$ and thus contribute to the above integral are those with maximum height near $1/T$. There are approximately $T^{\delta}$ such domains. Each such domain contributes $T^{-1}$ to the total integral. Thus we arrive at $T^{\delta-1}$ (this is a standard loss of mass argument).

  Finally to prove Theorem \ref{thm:fin vol} we put \eqref{thick} and \eqref{M Meps} together, giving
  \begin{align}
    \cM(T,F) = &c_0 \mu(F) T^{\delta-1} + c_1 T^{s-1} + \dots c_k T^{s-k} + O(\vep^{-1/2}T^{-1/2} \log T \|F\|_\Gamma)\\
    &\phantom{+++++++++++++++++++++}+ O(\vep T^{\delta-1}\|F\|_{1,\infty})\notag.
  \end{align}
  Now choose $\vep = T^{1/2-2\delta/3}(\log T)^{2/3} \|F\|_\Gamma^{2/3} \|F\|_{1,\infty}^{-2/3}$ to conclude \eqref{fin vol}.

  \section{Expanding Horospheres -- Rank 1}
  \label{s:rank 1}
  
  In this section we present the proof of Theorem \ref{thm:main asymptotic} for general $n$.
  To apply our method, we first thicken in the $y$-direction, then we additionally mollify in the $\Kbar$-direction. 
  To that end, fix a value $\vep>0$ to be determined later (depending on $T$). As we did when $n=1$ let $\psi_{\vep,T}(a_y)$ denote the scaled indicator function
  \begin{align*}
    \psi_{\vep,T}(a_y) : = \frac{1}{2\vep} \one\left( y\in \left[ \frac{1}{T} - \frac{\vep}{T^{n+1}}, \frac{1}{T} - \frac{\vep}{T^{n+1}}\right]\right).
  \end{align*}
  Thus $\psi_{\vep,T}$ is normalized against $\frac{\rd y}{y^{n+1}}$. Let $\psi_{\vep,T}(h a k) = \psi_{\vep,T}(a)$ and
  let
  \begin{align*}
    \Psi_{\vep,T}(g) : = \sum_{\gamma \in \Gamma_H \bk \Gamma} \psi_{\vep,T}(\gamma g)
  \end{align*}
  denote the automorphization of $\psi_{\vep,T}$.
  Furthermore, let $\psi_{\vep,K}(k)$ be a unit mass, even, bump function supported in a ball of radius $\vep$ around $k=e$.
  Given $F \in L^2(\Gamma \bk G/M)$ let
  \begin{align*}
    \wt{F} : = \int_{\Kbar} F(g k^{-1}) \psi_{\vep,\Kbar}(k) \ \rd k.
  \end{align*} 
  Now write
  \begin{align*}
    \cM_{\vep}(T) &: = \< \Psi_{\vep,T}, \wt{F} \>_\Gamma\\ 
    &=
    \int_{\Gamma \bk G} \Psi_{\vep,T}(g)\wt{F}(g) \ \rd g.
  \end{align*}

  \subsection{Unfolding to derive the differential equation}
  
  Let $\cX$ denote the range of $\vect{x}$ in the support of $\wt{F}$. Note that $\wt{F}$ has compact support since $F$ does, thus a particular coordinate of $\cX$ either represents the periodic length of the fundamental domain in a particular direction, or some finite interval inside of a non-compact direction. 
  Now unfold the inner product:
  \begin{align*}
    \cM_{\vep}(T) 
    &=    \int_{\Gamma \bk G} \wt{F}(g)\Psi_{\vep,T}(g)  \rd g\\
    &=    \int_{\Gamma \bk G} \wt F(g)\sum_{\Gamma_H\bk \Gamma} \psi_{\vep,T}(\gamma g)  \rd g\\
    &=    \int_{\Gamma_H \bk G} \wt F(g)  \psi_{\vep,T}( g)  \rd g\\
    &=    \int_0^\infty \psi_{\vep,T}(y) \int_{\cX} \int_K \wt F(h(\vect{x}) a_y k ) \    \   \frac{\rd k   \rd \vect{x}\rd y}{y^{n+1}}.
  \end{align*}
  We treat $\wt{F}$ as a general $L^2(\Gamma \bk G/M)$-function, and let
  \begin{align*}
    f(y): = \int_{\cX} \int_K \wt F(h(\vect{x}) a_y k ) \    \   \rd k   \rd \vect{x}.
  \end{align*}
  then $f(y)$ satisfies
  \begin{align}\label{diff eq}
    (y^2\partial_{yy} - (n-1) y \partial_y)f(y) = \int_{\cX} \int_K (\cC - \lambda)\wt F(h(\vect{x}) a_y k ) \    \   \rd k   \rd \vect{x} = : g(y).
  \end{align}
  This follows since applying the Casimir operator is done through right multiplication, thus, integrating with respect to $k$ reduces to the $K$-invariant case: i.e
  \begin{align*}
    \int_K \cC \wt{F}(ha_y k) \rd k = \Delta \int_K \wt{F}(ha_yk) \rd k.
  \end{align*}
  then we notice that we can use integration by parts to eliminate all the derivatives in $x_i$ coming from the Laplacian and since $\wt{F}$ is a compactly supported, smooth, function on $\Gamma \bk G / M$ it follows that all boundary terms are $0$. This leaves \eqref{diff eq}. 

  From here we apply a standard argument from the calculus of variations to conclude the following
  \begin{theorem}
    For any $s >0$ with $s \in (n/2,n)$, there exist constants $A, B$ such that
    \begin{align}
      f(y) = A y^{s} +  B y^{n-s} + y^{s} u(y) + y^{n-s} v(y)
    \end{align}
    where
    \begin{align*}
      &u(y) := (n-2s)^{-1}\int_{y}^{1/T+\vep/T^{n+1}} w^{n-s+2} g(w) \frac{\rd w}{w^{n+1}},
      \qquad \mbox{and}\\
      &v(y):= -(n-2s)^{-1}\int_{y}^{1/T+\vep/T^{n+1}} w^{s+2} g(y) \frac{\rd w}{w^{n+1}}
    \end{align*}
    Moreover, when $s = n/2+it$ then 
    \begin{align}
      f(y) = A y^{n/2} + By^{n/2}\log y + y^{n/2}u(y) + y^{n/2}v(y) \log y
    \end{align}
    where 
    \begin{align*}
      &u(y) := \int_{y}^{1/T+\vep/T^{n+1}} w^{n/2+2}\log(w) g(w) \frac{\rd w}{w^{n+1}} 
      \qquad \mbox{and}\\
      &v(y):= - \int_{y}^{1/T+\vep/T^{n+1}} w^{n/2+2} g(w) \frac{\rd w}{w^{n+1}}.
    \end{align*}
  \end{theorem}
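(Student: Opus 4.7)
The ODE derived in \eqref{diff eq} is an inhomogeneous Euler (equidimensional) equation, and my plan is to solve it by the classical variation of parameters, handling the degenerate case $\lambda = (n/2)^2$ separately.

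First I would locate the two linearly independent solutions of the homogeneous equation by plugging in the ansatz $f(y) = y^r$. This yields the indicial equation $r^2 - n r + \lambda = 0$, whose roots are $s$ and $n-s$ precisely when $\lambda = s(n-s)$, and which coincide at $r = n/2$ exactly when $s = n/2$. Assuming $s \ne n/2$, the Wronskian of $y^s$ and $y^{n-s}$ equals $(n-2s)\, y^{n-1}$, which is nonvanishing on $(0,\infty)$, so these two functions do form a fundamental system.

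Next I would apply the variation of parameters formula. Writing the inhomogeneous solution as $u(y) y^s + v(y) y^{n-s}$ and imposing the usual constraint $u' y^s + v' y^{n-s} = 0$ reduces the ODE to an algebraic $2 \times 2$ linear system for $u'$ and $v'$ whose coefficient matrix has determinant $(n-2s) y^{n-1}$; the resulting $u'$ and $v'$ are explicit constant multiples of $g(y)$ times a power of $y$. Integrating from $y$ to the right endpoint $y_+ := 1/T + \vep/T^{n+1}$ of the support of $\psi_{\vep,T}$ --- so that any boundary contributions at $y_+$ are absorbed into the as-yet-undetermined constants $A$ and $B$ --- produces the formulas asserted in the theorem, after rewriting the power of $w$ in the integrand using the measure $\rd w/w^{n+1}$.

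The degenerate case $s = n/2$ requires only minor modification: reduction of order produces a second fundamental solution $y^{n/2} \log y$, whose Wronskian with $y^{n/2}$ is $y^{n-1}$, and the same variation of parameters calculation, performed with this pair in place of $y^s, y^{n-s}$, yields the logarithmic formulas. The main obstacle throughout is essentially bookkeeping --- making sure that the signs, the choice of endpoint at $y_+$, and the normalizations coming from the measure $\rd y/y^{n+1}$ all line up consistently --- since the underlying machinery is entirely classical and parallel to \cite[Lemma B.1]{Kontorovich2009}, which was invoked in the $n=1$ warm-up of Section \ref{s:n=1}.
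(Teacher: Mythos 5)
Your proposal is correct and matches the paper's intended argument: the paper simply cites \cite[Appendix B]{Kontorovich2009}, which is exactly the variation-of-parameters computation you outline (Euler equation, indicial roots $s$ and $n-s$ from $r^2-nr+\lambda=0$, Wronskian $(n-2s)y^{n-1}$, and reduction of order giving $y^{n/2}\log y$ when the roots collide). The bookkeeping you flag is real --- equation \eqref{diff eq} as printed in the paper drops the $\lambda f$ term from the left side and has a sign/normalization discrepancy relative to the powers $w^{n-s+2}/w^{n+1}$ appearing in the statement (one must carry the $y^2$ from the leading coefficient into $g$ consistently, and the paper's $v$ also has a typo $g(y)$ for $g(w)$) --- but none of this affects the substance of what you wrote.
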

  \begin{proof}
    The proof of this theorem is classical and the details can be found in \cite[Appendix B]{Kontorovich2009}. We omit the proof here.
  \end{proof}

  From here we can apply the integral in $y$ and use the same Cauchy-Schwarz argument, as used above to derive \eqref{M exp}, to write
  \begin{align}\label{M expand}
    \cM_{\vep}(T,F) = A \alpha(T) + B\beta(T) + O_{T,\vep,\Gamma}(\|(\cC-\lambda) \wt{F}\|_{L^2(\Gamma \bk G)} )
  \end{align}
  where, if $s \in (n/2,n)$
  \begin{align*}
    \alpha(T) &: =  \frac{1}{2\vep}\int_{1/T-\vep/T^{n+1}}^{1/T+\vep/T^{n+1}} y^{s-(n+1)} \rd y = T^{-s} + O(\vep T^{-s-1})\\
    \beta(T) &:=  \frac{1}{2\vep}\int_{1/T-\vep/T^{n+1}}^{1/T+\vep/T^{n+1}} y^{n-s-(n+1)} \rd y = T^{s-n} + O(\vep T^{s-n-1}),
  \end{align*}
  and if $s = n/2+it$
  \begin{align*}
    \alpha(T) &: = \frac{1}{2\vep} \int_{1/T-\vep/T^{n+1}}^{1/T+\vep/T^{n+1}} y^{n/2-(n+1)} \rd y = T^{-n/2} + O(\vep T^{-n/2-1})\\
    \beta(T) &:=  \frac{1}{2\vep}\int_{1/T-\vep/T^{n+1}}^{1/T+\vep/T^{n+1}} y^{n/2-s-(n+1)}\log(y) \rd y = T^{-n/2}\log(T) + O( \vep T^{-n/2-1}\log(T)).
  \end{align*}

  \subsection{Inserting the Laplacian}
  \label{ss:Inserting the Laplacian}

  Once again, fix a real number $b>0$, let
  \begin{align*}
    K_T(s) := \frac{\beta(b)\alpha(T) -\alpha(b)\beta(T)}{\alpha(1)\beta(b)-\alpha(b)\beta(1)}, \qquad
    L_T(s) := \frac{\alpha(1)\beta(T) -\beta(1)\alpha(T)}{\alpha(1)\beta(b)-\alpha(b)\beta(1)},
  \end{align*}
  and note that $K_T(s), L_T(s) \asymp T^{s-n}$ when $s\neq n/2$ and $K_T(s),L_T(s)\ll T^{-n/2}\log(T)$ when $s = n/2+it$. The following theorem (analogous to Theorem \ref{thm:main ident finite}) shows that we can 'grow' $\Psi_{\vep,1}$ to height $T$ using the Casimir operator

  \begin{theorem}
    For fixed $T\ge 1$ there exists a number $b>0$ and corresponding $K_T$ and $L_T$ such that
    \begin{align}\label{Psi ident}
      \Psi_{\vep,T} = K_T(\cC) \Psi_{\vep,1} + L_T(\cC) \Psi_{\vep,b}
    \end{align}
    almost everywhere. Moreover $K_T$ and $L_T$ satisfy the following bounds
    \begin{align}\label{KL bounds main}
      K_T(s), L_T(s) \ll \begin{cases}
        T^{s-n} & \mbox{ if } s \in (n/2,n)\\
        T^{-n/2}\log T & \mbox{ if } s = n/2+it.
       \end{cases}
    \end{align}
  \end{theorem}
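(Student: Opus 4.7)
The plan is to adapt the proof of Theorem \ref{thm:main ident finite} to arbitrary $n$, using the abstract spectral theorem to lift a pointwise identity for eigenfunctions to an operator identity on $L^2(\Gamma\bk G/M)$. The key observation is that the asymptotic expansion \eqref{M expand} becomes an \emph{exact} equality whenever $\wt F$ is replaced by an eigenfunction of the Casimir operator with eigenvalue $\lambda=s(n-s)$, because then $(\cC-\lambda)\wt F=0$ and the error term on the right vanishes.

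First I would fix a real number $b>1$ (to be chosen below) and, for any $\phi\in L^2(\Gamma\bk G/M)$ that is a Casimir eigenfunction with parameter $s$, repeat the unfolding and ODE analysis from Section~\ref{ss:Inserting the Laplacian} with $\wt F$ replaced by $\phi$ to obtain $\<\Psi_{\vep,T},\phi\>=A_\phi\,\alpha(T)+B_\phi\,\beta(T)$ with no error. Evaluating at $T=1$ and $T=b$ yields a $2\times 2$ linear system in $A_\phi,B_\phi$ whose Cramer-rule solution is exactly
\begin{align*}
\<\Psi_{\vep,T},\phi\>=K_T(s)\<\Psi_{\vep,1},\phi\>+L_T(s)\<\Psi_{\vep,b},\phi\>,
\end{align*}
which is \eqref{Psi ident} paired against $\phi$. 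For this to be well-defined one needs the Wronskian-like denominator $\alpha(1)\beta(b)-\alpha(b)\beta(1)$ to be nonzero uniformly in $\lambda$; from the explicit asymptotics for $\alpha(T),\beta(T)$ given after \eqref{M expand}, the denominator is $\asymp b^{s-n}-b^{-s}$ for $s\in(n/2,n)$ and $\asymp \log b$ at $s=n/2+it$, so any fixed $b>1$ works. The bounds \eqref{KL bounds main} follow directly by plugging those same asymptotics into the numerators of $K_T(s)$ and $L_T(s)$.

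Next I would invoke the abstract spectral theorem (Theorem \ref{thm:AST}). By Parseval, the pointwise identity above reads $\wh{\Psi_{\vep,T}}(\lambda)=K_T(\lambda)\wh{\Psi_{\vep,1}}(\lambda)+L_T(\lambda)\wh{\Psi_{\vep,b}}(\lambda)$ at every point $\lambda$ of the discrete spectrum, and the same identity extends across the continuous spectrum since $K_T,L_T$ are continuous functions of the spectral parameter and the right-hand side equals $\wh{K_T(\cC)\Psi_{\vep,1}+L_T(\cC)\Psi_{\vep,b}}(\lambda)$ by the functional calculus. Unitarity of the spectral transform then forces \eqref{Psi ident} as an equality in $L^2(\Gamma\bk G)$, hence almost everywhere.

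The main obstacle will be making the ODE step rigorous across the continuous spectrum: genuine eigenfunctions in the discrete part of the spectrum sit in $L^2$ and are smooth enough to justify the integration by parts leading to \eqref{diff eq}, but on the continuous part one has to either work with smooth wavepackets, which form a dense subspace to which the ODE argument applies verbatim, and then pass to a limit in $L^2$, or alternatively argue directly that both sides of \eqref{Psi ident} are bounded operators on $L^2$ depending continuously on the spectral parameter. A secondary subtlety is the confluent case $s=n/2+it$, where the basis solutions $y^s,y^{n-s}$ degenerate to $y^{n/2}$ and $y^{n/2}\log y$; here one must use the logarithmic branch of $\alpha,\beta$ given in Section~\ref{ss:Inserting the Laplacian} and verify separately that $K_T,L_T=O(T^{-n/2}\log T)$ on this part of the spectrum.
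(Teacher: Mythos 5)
Your proposal captures the broad shape of the argument (the Cramer-rule construction of $K_T,L_T$ from the two-point evaluation, the bound derivation from the asymptotics of $\alpha,\beta$, and the need to lift a scalar identity to an operator identity via the abstract spectral theorem), and your derivation of the bounds \eqref{KL bounds main} is correct. However, there is a genuine gap in the way you pass from the discrete to the continuous spectrum, and this gap is precisely what the paper's argument is designed to avoid.

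The key issue: you first establish the exact scalar identity $\<\Psi_{\vep,T},\phi\>=K_T(s)\<\Psi_{\vep,1},\phi\>+L_T(s)\<\Psi_{\vep,b},\phi\>$ by pairing against genuine Casimir eigenfunctions $\phi$, and then assert that ``the same identity extends across the continuous spectrum since $K_T,L_T$ are continuous functions of the spectral parameter.'' This is not a valid inference. Knowing an identity holds at finitely many discrete eigenvalues, together with continuity of $K_T,L_T$, tells you nothing about what happens on the continuous spectrum, where there are no $L^2$ eigenfunctions at all. Your proposed fix via ``smooth wavepackets'' is the right instinct, but note that for a wavepacket $\phi$ the ODE argument does \emph{not} apply verbatim: $\phi$ is not an eigenfunction, so $(\cC-\lambda)\phi\neq 0$ and the inhomogeneous term $g(y)$ in \eqref{diff eq} is nonzero. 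You therefore only get the approximate version \eqref{M expand}, and you must control the error $\|(\cC-\lambda)\phi\|$ as the wavepacket concentrates. Making that control precise \emph{is} the proof, not a mop-up step.

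The paper avoids these issues by never pairing against eigenfunctions at all. It defines the difference function $G_T:=\Psi_{\vep,T}-K_T(\cC)\Psi_{\vep,1}-L_T(\cC)\Psi_{\vep,b}$ and shows that for \emph{every} $\lambda>0$ and \emph{every} compactly supported smooth test function $\wt F$, one has $\<G_T,\wt F\>\ll_{\lambda,T}\|(\cC-\lambda)\wt F\|_\Gamma$. Because the estimate holds simultaneously for all $\lambda$ and all $\wt F$, one can take $\wt F$ with spectral support concentrated in a shrinking ball around any chosen $\lambda$; then the right side tends to zero faster than the left, forcing $\wh{G_T}$ to vanish a.e.\ with respect to the spectral measure (this spectral-localization argument is spelled out in detail in the $\SL_3$ proof later in the paper, and in Kontorovich 2009). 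This route has two advantages you should internalize: it works with arbitrary compactly supported $\wt F$ (so the integration by parts in \eqref{diff eq} is justified, which would be delicate for non-compactly-supported eigenfunctions in the infinite-volume setting), and it treats the discrete and continuous spectrum in one uniform step rather than by separate arguments plus a limiting procedure.
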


  \begin{proof}
    First define the difference function
    \begin{align}\label{G def}
      G_T:=\Psi_{\vep,T} - K_T(\cC) \Psi_{\vep,1} + L_T(\cC) \Psi_{\vep,b}.
    \end{align}
    We can use exactly the argument in \cite[Proof of Proposition 3.5]{Kontorovich2009} to show that for any $\wt{F} \in L^2(\Gamma \bk G /M)$ we have that 
    \begin{align}\label{G bound}
      \<G_T,\wt{F}\> \ll_{\lambda,T} \|(\cC-\lambda) \wt{F} \|_{\Gamma}.
    \end{align}
    This follows by the construction of $K_T$ and $L_T$ and \eqref{M expand}, we omit the details. 
    
    Now, any function $G_T$ satisfying \eqref{G bound} for every $\lambda$ and any $\wt{F} \in L^2(\Gamma \bk G /M)$ must vanish almost everywhere by the same argument at \cite[Proof of Theorem 3.2]{Kontorovich2009}.
    
  \end{proof}

  \subsection{Proof of Theorem \ref{thm:main asymptotic}}

  Now, to prove Theorem \ref{thm:main asymptotic} we first exploit the abstract spectral theorem to pass to the spectrum. That is by the abstract Parseval's identity \eqref{API}
  \begin{align}
    \cM_{\vep}(T,F) &= \<\wt{F}, \Psi_{\vep,T}\>_{\Gamma} \notag\\
    &= \<\wh{\wt{F}}, \wh{\Psi_{\vep,T}}\>_{\Spec} \notag\\
    &=\wh{\wt{F}}(\lambda_0)\wh{\Psi_{\vep,T}}(\lambda_0)
    +\int_{\Spec \setminus\{\lambda_0 \}} \wh{\wt{F}}(\lambda) \wh{\Psi_{\vep,T}}(\lambda) \rd \wt{\mu}(\lambda).
  \end{align}

  First, let us analyze the term coming from the base eigenvalue. Using \eqref{proj} and our main identity \eqref{Psi ident} we have that
  \begin{align*} 
    \wh{\wt{F}}(\lambda_0)\wh{\Psi_{\vep,T}}(\lambda_0) &= K_T(\lambda_0)\< \Proj_{\mathscr{H}_0}(\wt{F}), \Proj_{\mathscr{H}_0}(\Psi_{\vep,1})\> + L_T(\lambda_0)\< \Proj_{\mathscr{H}_0}(\wt{F}), \Proj_{\mathscr{H}_0}(\Psi_{\vep,b})\>\\
    &=  T^{\delta-n}c \< \Proj_{\mathscr{H}_0}(\wt{F}), \Proj_{\mathscr{H}_0}(\Psi_{\vep,1}) + \Proj_{\mathscr{H}_0}(\Psi_{\vep,b}) \> + O(T^{-\delta}). 
  \end{align*}
  The projection $\Proj_{\mathscr{H}_0}(\Psi_{\vep,1})$ can be expressed using a Burger-Roblin type measure, as done in \cite[p. 861]{MohammadiOh2015}. From whence it follows that $\Proj_{\mathscr{H}_0}(\Psi_{\vep,1}) = C + O(\vep)$. Similarly, we can use our understanding of the base eigenspace to conclude that $\Proj_{\mathscr{H}_0}(\wt{F}) = C(1+O(\vep))$ for some $C$ depending only on $F$ (this comes from the smoothing in $\Kbar$ and another mean value theorem argument). From here, we conclude that
  \begin{align*} 
    \wh{\wt{F}}(\lambda_0)\wh{\Psi_{\vep,T}}(\lambda_0) 
    &=  cT^{\delta-n}(1+O(\vep)) + O(T^{-\delta}). 
  \end{align*}
  Note that we are unable to extract the $\vep$-dependence of the terms coming from larger eigenvalues. In thickened form (i.e $\vep$ fixed) we can apply the same process and extract lower order terms. However at present we have no way to extract the $\vep$-dependence of $\Proj_{\mathscr{H}_i}(\Psi_{\vep,1})$, since $\mathscr{H}_i$ has infinite multiplicity. Note that this is not the case if $F$ is right $K$-invariant, this is why, in that case, we can extract all lower order terms coming from the exceptional spectrum.

  As for the remainder of the spectrum we can apply Parseval and Cauchy-Schwarz in the same way as we did previously, arriving at
  \begin{align*}
    \int_{\Spec \setminus\{\lambda_0 \}} \wh{\wt{F}}(\lambda) \wh{\Psi_{\vep,T}}(\lambda) \ \rd \wt{\mu}(\lambda) \ll \max\{ T^{s_1-n}, T^{-n/2}\log T \} \|F\|_{\Gamma} \|\Psi_{\vep,1}\|_{\Gamma}
 \end{align*}
  Now we note that $\|\Psi_{\vep,1}\| = O(\vep^{-(\dim(\Kbar)+1)/2}) = O\left(\vep^{-\frac{n^2-3n+6}{4}}\right)$.

  From here deduce (let us assume for simplicity $s_1 < n/2$
  \begin{align*}
    \cM_{\vep}(T,F)= cT^{\delta-n}(1+O(\vep)) + O\left(\vep^{- \frac{n^2-3n+6}{4}}\|F\|_\Gamma T^{-n/2} \log T \right).
  \end{align*}
  By the same mean value principle argument as previously used
  \begin{align}
    \abs{\cM_{\vep}(T,F) - \cM_T(F)}\ll T^{\delta-n} \vep \|F\|_{1,\infty}.
  \end{align}
  Choosing $\vep = T^{-\frac{4(\delta-n/2) }{P(n)}} (\|F\|_\Gamma \|F\|_{1,\infty}^{-1} \log T)^{\frac{4}{P(n)}}$,
  \begin{align*}
    \cM_{\vep}(T,F)= cT^{\delta-n} + O\left((T^{-\frac{4(\delta-n/2)}{P(n)}} (\log T)^{\frac{4}{P(n)}}) T^{\delta-n} \|F\|_\Gamma^{\frac{4}{P(n)}} \|F\|_{1,\infty}^{\frac{P(n) -4}{P(n)}} \right).
  \end{align*}

\section{Preliminaries in higher rank}
\label{s:prelimRank}
Now let $G:= \SL_n(\R)$ for $n > 2$. 

\textbf{Decomposition of $L^2(\Gamma \bk G/K)$ into irreducibles and the spectral theorem:} Let $\Gamma < \SL_n(\Z)$ have finite co-volume, let $\mathscr{H}:= L^2(\Gamma \bk G/K)$ be the $L^2$ Hilbert space. The Casimir operators, $\Delta_1, \dots, \Delta_{n-1}$ are positive, self-adjoint operators acting on the $L^2$ Hilbert space. Thus the spectrum of each lies in $\R_{>0}$. We could apply the abstract spectral theorem to each individually, however, for our purposes this is not enough. We will need to consider the joint spectrum and spectral measure coming from the unitary dual representation. 

  The group $G$ acts by right regular representation on $\mathscr{H}$. The Hilbert space $\mathscr{H}$ decomposes into components as follows
  \begin{align*}
    \mathscr{H} = \mathscr{H}_0 \oplus \mathscr{H}_1  \oplus \dots  \oplus \mathscr{H}_k  \oplus \mathscr{H}^{tempered} 
  \end{align*}
  where $\mathscr{H}_i$ is a finite dimensional eigenspace with $\Delta_j$-eigenvalue $\lambda_j^{(i)}$ and $\mathscr{H}^{tempered}$ denotes the tempered spectrum.   

  Now there exists a measure $\wt{\mu}$ on the space of irreducible representations such that
  \begin{align*}
    f = \int \wh{f}(\vect{\lambda}) \rd \wt{\mu}(\vect{\lambda})
  \end{align*}
  (see \cite[p. 122]{CarterSegalMacDonald1995}), where $\wh{f}$ denotes the projection of $f$ onto the irreducible parameterized by $\vect{\lambda}$. Moreover, Harish-Chandra showed that this measure satisfies Plancherel's identity (see \cite[p. 122]{CarterSegalMacDonald1995})
  \begin{align}
    \int_{\Gamma \bk G} \abs{f}^2 \rd g = \int_{0}^\infty  | \wh{f} |^2 \rd \wt{\mu},
  \end{align}
  and by the polarization formula: $\<f,g\> = \frac{1}{2}(\|f\|^2 +\|g\|^2 -\|f-g\|^2)$, Plancherel's identity leads to Parseval's identity:
  \begin{align}\label{Parseval rank}
    \< f, g\>_{\Gamma} = \int_{0}^\infty  \wh{f}\ \wh{g}\ \rd \wt{\mu}.
  \end{align}
  Finally, by Schur's lemma \cite[p. 84]{CarterSegalMacDonald1995} the Casimir operators each act via multiplication on the irreducible representations: that is, given an irreducible representation we can associate a value $\vect{\lambda}$, then $\Delta_i$ acts via a scalar
  \begin{align} \label{diagonal}
    \wh{\Delta_i f}(\vect{\lambda}) = \lambda_i  \wh{f}(\vect{\lambda}).
  \end{align}
  Let $\cS$ denote the collection of scalars of $\Delta_1, \dots, \Delta_{n-1}$ acting on the different irreducible subspaces (without multiplicity). Thus
  \begin{align*}
    \cS = \{ \vect{\lambda}^{(0)},\vect{\lambda}^{(1)}, \dots, \vect{\lambda}^{(k)}\} \cup \cS^{cont},
  \end{align*}
  where $\cS^{cont}$ denotes the tempered spectrum.

  \section{Expanding Horospheres in $\SL_3(\R)$}
  \label{s:SL3R}

  In higher rank, the proof follows the same approach with some modification; for simplicity we start with the case $n=3$. Once again we need to smooth in the non-horospherical directions, in this case $y_1$ and $y_2$. To that end, fix a value $\vep$ (one could choose to have two smoothing parameters, but when optimizing it turns out that the optimal choice is to set them equal to each other). Let $\psi_{T,\vep}$ be the indicator of the region $[1/T-\vep/T^3,1/T+\vep/T^3]$ multiplied by $\vep^{-1}$. For $\cH \ni z = h(\vect{x})a(\vect{y})$ we write 
  \begin{align*}
    \psi_{\vect{T},\vep}(z) = \psi_{T_1,\vep}(y_1) \psi_{T_2,\vep}(y_2), 
  \end{align*}
  now automorphize $\psi_{\vect{T},\vep}$ as follows
  \begin{align*}
    \Psi_{\vect{T},\vep}(z) = \sum_{\Gamma_H \bk \Gamma}\psi_{\vect{T},\vep}(\gamma z).
  \end{align*}
  Let $\cF$ denote a fundamental domain for the group $\Gamma$. Now we define the $\vep$-thickened horospherical average as
  \begin{align*}
    \cM_{\vep}(\vect{T},F): = \int_{\cF}F(z) \Psi_{\vect{T},\vep}(z) \rd z.
  \end{align*}

  \subsection{Differential Equation}

  Once again, our first step is to unfold $\cM_{\vep}$:
  \begin{align*}
    \cM_{\vep}(\vect{T},F) &= \int_{\Gamma_H\bk \cH} F(z) \psi_{\vep,\vect{T}}(z) \rd z\\
    &= \int_{\R^2_{>0}}\psi_{\vep,T_1}(y_1)\psi_{\vep,T_2}(y_2) \int_{\Gamma_H\bk H} F(h a(\vect{y})) \rd h \frac{\rd y_1\rd y_2}{y_1^3y_2^3}.
  \end{align*}
  Let $f(\vect{y}):= \int_{\Gamma_H\bk H} F(h a(\vect{y})) \rd h$ denote the inner integral.

  Fix any vector $\vect{\lambda}=(\lambda_1, \lambda_2)$.  Then by integration by parts,  the function $f$ satisfies the equation
  \begin{align}\label{diffeq 1}
    (y_1^2\partial_{y_1y_1} +y_2^2\partial_{y_2y_2}-y_1y_2\partial_{y_1y_2}-\lambda_1) f(\vect{y}) = \int_{\Gamma_H\bk H} (\Delta_1-\lambda_1)F(h a(\vect{y})) \rd h = : g_1(y_1,y_2).
  \end{align}
  Likewise, $f$ satisfies
  \begin{align}\label{diffeq 2}
    (-y_1^2y_2\partial_{y_1y_1y_2} + y_1 y_2^2\partial_{y_1y_2y_2} + y_1^2\partial_{y_1y_1}-y_2^2\partial_{y_2y_2}  -\lambda_2) f(\vect{y}) = \int_{\Gamma_H\bk H} (\Delta_2-\lambda_2)F(h a(\vect{y})) \rd h.
  \end{align}
  In both cases $y_1^{s}y_2^{r}$ is a solution to the homogeneous case, but the first equation requires $sr-s(s-1)-r(r-1)= \lambda_1$ and the second requires $sr(s-r)  +s(s-1) +r(r-1)  = \lambda_2$. For a given pair $\lambda_1,\lambda_2$ there are at most six pairs $(s, r)$ satisfying both equations. Let $\phi_{homo}(\vect{y}):=  \sum_{i=1}^6 A_i y_1^{s_i}y_2^{r_i}$ denote the homogeneous solution to \emph{both} equations.

  Let $\phi_{homo,1}$ denote a general solution to the homogeneous \eqref{diffeq 1}. Now we can use the method of Green's functions to lift this homogeneous solution to a solution of the inhomogeneous equation, that is our particular solution will take the form
  \begin{align*}
    u_p(y_1, y_2) : = \int_{\xi_1,\xi_2} \cG(y_1,y_2,\xi_1,\xi_2) g(\xi_1,\xi_2) \rd \xi_1\rd \xi_2.
  \end{align*}
  Then $\cG$ satisfies the equation
  \begin{align*}
    (y_1y_2\partial_{y_1y_2} - y_1^2\partial_{y_1y_1} -y_2^2\partial_{y_2y_2}-\lambda)\cG(y_1,y_2,\xi_1,\xi_2) = \delta(y_1-\xi_1) \delta(y_2-\xi_2).
  \end{align*}
  Now assume $\cG$ is point pair invariant, i.e it depends only on the scalar $r=\abs{\vect{\xi}-\vect{y}}$. In this case, after integrating out the angular direction the above equation becomes for $r>0$
  \begin{align*}
    \frac{\pi}{2}r^2 \partial_{rr} h(r) = 2\pi \lambda_1 h(r).
  \end{align*}
  Hence, writing $\lambda_1= \kappa (\kappa-1)$ yields   
  \begin{align*}
    \cG(\vect{y},\vect{\xi}) = \abs{\vect{\xi}-\vect{y}}^{\kappa}.
  \end{align*}
  Thus our full solution is
  \begin{align}
    f(\vect{y}) = \phi_{homo, 1}(\vect{y}) + u_{p,1}(y_1,y_2).
  \end{align}
  Similarly, if we apply the same argument with $\Delta_2$ in place of $\Delta_1$ we arrive at the equation
  \begin{align}
    f(\vect{y}) = \phi_{homo, 2}(\vect{y}) + u_{p,2}(y_1,y_2),
  \end{align}
  where $\phi_{homo,2}$ satisfies $\Delta_2 \phi_{homo, 2} =0$ and $u_{p,2}(y_1,y_2)$ is the particular solution associated to $\lambda_2$.

  Thus, we can write
  \begin{align*}
    f(\vect{y}) &= \phi_{homo}(\vect{y}) + \wt{\phi}_{homo, 1}(\vect{y}) + u_{p,1}(y_1,y_2)\\
    &= \phi_{homo}(\vect{y}) + \wt{\phi}_{homo, 2}(\vect{y}) + u_{p,2}(y_1,y_2),
  \end{align*}
  where $\wt{\phi}_{homo, i}(\vect{y}) = \phi_{homo, i}(\vect{y}) - \phi_{homo}(\vect{y})$, for $i=1,2$.
  Now, to integrate with respect to $\vect{y}$, let
  \begin{align*}
    \alpha(\vect{T}) : =\int_0^\infty \int_0^\infty \psi_{\vep,\vect{T}}(\vect{y})\phi_{homo}(\vect{y}) y_1^{-3}y_2^{-3} \rd y_1\rd y_2.
  \end{align*}
  Then we have that, by the usual Cauchy-Schwartz argument and the definitions of $\phi_{homo},$ $\phi_{homo,1},$ and $\phi_{homo,2}$, for every pair $\lambda_1$ and $\lambda_2$ in $\R^2$ we have
  \begin{align}\label{M Delta bound}
    \cM_{\vep}(\vect{T},F) = \alpha(\vect{T}) + O(\|(\Delta_1 - \lambda_1)F\|) + O(\|(\Delta_2 - \lambda_2)F\|).
  \end{align}

  Further, for $i=1,\dots,6$, let 
  \begin{align}
    \alpha_i(\vect{T}) &: =\int_0^\infty \int_0^\infty \psi_{\vep,\vect{T}}(\vect{y}) y_1^{s_i}y_2^{r_i} y_1^{-3}y_2^{-3} \rd y_1\rd y_2 \notag \\
    &= T_1^{-s_i}T_2^{-r_i} + O(\vep T_1^{-s_i-1}T_2^{-r_i} + \vep T_2^{-r_i-1}T_1^{-s_i}) .\label{alpha estimate}
  \end{align}
  Then
  \begin{align}
    \cM_{\vep}(\vect{T},F) = \sum_{i=1}^6A_i\alpha_i(\vect{T}) + O(\|(\Delta_1 - \lambda_2)F\|) + O(\|(\Delta_2 - \lambda_2)F\|).
  \end{align}
  Now we fix $6$ times $\vect{b}_i\in \R^2_{>1}$ and write the matrix equation
  \begin{align*}
      (\cM_{\vep}(\vect{b}_1,F),
      \cM_{\vep}(\vect{b}_2,F), \dots
      \cM_{\vep}(\vect{b}_6,F))^T
    =
    \vect{M} \vect{A}.
  \end{align*}
  where $(\vect{M})_{ij} = \alpha_i(\vect{b}_j)$ and $\vect{A} = (A_1,A_2, \dots, A_6)^T$. By an appropriate choice of $\vect{b}_i$ we can ensure that $M$ is invertible. From here we can solve for $A_i$ and write
  \begin{align}\label{M ident pre}
    \cM_{\vep}(\vect{T},F) = \sum_{i=1}^6 K_{i,\vect{T}}(\vect{\lambda}) \cM_{\vep}(\vect{b}_i, F)  + O(\|(\Delta_1 - \lambda_1)F\|) + O(\|(\Delta_2 - \lambda_2)F\|)
  \end{align}
  for some $K_{i,\vect{T}}(\vect{\lambda})$ which can be made explicit. Now we are ready to prove following main identity which underpins the spectral estimates that are crucial in our later analysis. It states that in \eqref{M ident pre} we can replace the functions of $\vect{\lambda}$ with the same functions of $\vect{\Delta} = (\Delta_1, \Delta_2)$ (defined via power series) without affecting the error term. Then since the modified \eqref{M ident pre} holds for all points $\vect{\lambda}$ in the spectrum, we can in fact, show that the error vanishes.

    \begin{theorem}[Main Identity]
    There exist constants $\vect{b}_i$  such that for $\vect{T}$ large enough
    \begin{align}\label{main ident SL3R}
      \Psi_{\vep,\vect{T}} = \sum_{i=1}^6 K_{i,\vect{T}}(\vect{\Delta})\Psi_{\vep,\vect{b}_i},
    \end{align}
    almost everywhere. Moreover $K_{i,\vect{T}}$ all satisfy,
    \begin{align}\label{K bounds}
      K_{i,\vect{T}}(\vect{\lambda}) = \begin{cases}
         T_1^{-s_i}T_2^{-r_i} + O(\vep T_1^{-s_i-1}T_2^{-r_i}+\vep T_1^{-s_i}T_2^{-r_i-1})
        & \text{ if } s_i,r_i  < 1  \\
        T_1^{-1}T_2^{-1}\log(T_1)\log(T_2) & \text{ if } s_i,r_i \in \{ 1+it\}^2.
        \end{cases}
    \end{align}
  \end{theorem}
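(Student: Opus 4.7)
The plan mirrors the rank-one argument used to prove \eqref{Psi main}. Define the discrepancy
\begin{align*}
G_{\vect{T}} := \Psi_{\vep,\vect{T}} - \sum_{i=1}^{6} K_{i,\vect{T}}(\vect{\Delta})\,\Psi_{\vep,\vect{b}_i},
\end{align*}
where $K_{i,\vect{T}}(\vect{\lambda})$ is the rational function of $\vect{\lambda}$ produced by applying Cramer's rule to the $6\times 6$ linear system implicit in \eqref{M ident pre}, and $K_{i,\vect{T}}(\vect{\Delta}):=K_{i,\vect{T}}(\Delta_1,\Delta_2)$ is its functional-calculus extension to an operator on $L^2(\Gamma\bk\cH)$. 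It suffices to show $G_{\vect{T}}=0$ almost everywhere; the pointwise bounds \eqref{K bounds} then follow immediately by substituting \eqref{alpha estimate} into the cofactor expansion of $\vect{M}^{-1}$.

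The first step is to upgrade \eqref{M ident pre} to the pairing estimate
\begin{align*}
\bigl|\langle G_{\vect{T}},\wt{F}\rangle_\Gamma\bigr| \;\ll_{\vect{\lambda},\vect{T},\vep}\; \|(\Delta_1-\lambda_1)\wt{F}\|_\Gamma + \|(\Delta_2-\lambda_2)\wt{F}\|_\Gamma
\end{align*}
valid for every $\wt{F}\in C_c^\infty(\Gamma\bk\cH)$ and every $\vect{\lambda}\in\R^2$. This is essentially a restatement of \eqref{M ident pre}: using self-adjointness of $\Delta_1,\Delta_2$ to move each operator $K_{i,\vect{T}}(\vect{\Delta})$ onto $\wt{F}$ reduces the pairing to a weighted sum of six horospherical averages $\cM_{\vep}(\vect{b}_i,\wt{F})$, and the same Green-function/Cauchy--Schwarz argument that produced \eqref{M Delta bound} bounds the combined error by the displayed right-hand side.

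The second step is a spectral density argument. By Parseval \eqref{Parseval rank} it is enough to show that $\widehat{G_{\vect{T}}}(\vect{\lambda})=0$ for $\wt{\mu}$-almost every $\vect{\lambda}\in\cS$. Fix such a $\vect{\lambda}$ and test against a sequence $\wt{F}_n$ whose spectral transforms $\widehat{\wt{F}_n}$ form an approximate identity at $\vect{\lambda}$. By \eqref{diagonal} we have $\|(\Delta_j-\lambda_j)\wt{F}_n\|_\Gamma\to 0$, so the pairing bound forces $\langle G_{\vect{T}},\wt{F}_n\rangle_\Gamma\to 0$; but Parseval identifies this limit, up to a constant, with $\widehat{G_{\vect{T}}}(\vect{\lambda})$. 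Varying $\vect{\lambda}$ gives $G_{\vect{T}}=0$ in $L^2$.

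The step I expect to be the main obstacle is engineering the six base heights $\vect{b}_1,\dots,\vect{b}_6\in\R^2_{>1}$ so that the matrix $\vect{M}=(\alpha_i(\vect{b}_j))$ is \emph{uniformly} invertible as $\vect{\lambda}$ varies over $\cS$. For generic $\vect{\lambda}$ the six exponent pairs $(s_i,r_i)$ are distinct, a generic choice of heights suffices, and the first line of \eqref{K bounds} falls out of the cofactor formula combined with \eqref{alpha estimate}. The delicate regime is the boundary of the tempered spectrum, where some exponents collide and the six solutions $y_1^{s_i}y_2^{r_i}$ of the indicial system become linearly dependent; in that case one must pass, exactly as in the $\lambda=1/4$ case of Section~\ref{s:n=1}, to the logarithmic companion solutions and redo Cramer's rule on the perturbed basis, and this bookkeeping produces the extra $\log T_1\log T_2$ factor recorded in the second line of \eqref{K bounds}.
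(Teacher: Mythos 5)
Your outline follows the paper's skeleton faithfully: define the discrepancy $G_{\vep,\vect{T}}$, establish a pairing estimate of the form $|\langle G_{\vep,\vect{T}},\wt{F}\rangle_\Gamma|\ll \|(\Delta_1-\lambda_1)\wt{F}\|+\|(\Delta_2-\lambda_2)\wt{F}\|$, and then localize spectrally to conclude $\wh{G_{\vep,\vect{T}}}=0$ $\wt{\mu}$-a.e. However, two of the links in this chain are described in a way that does not actually close.

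First, the pairing estimate. You propose to move $K_{i,\vect{T}}(\vect{\Delta})$ onto $\wt{F}$ by self-adjointness and then invoke the Green-function/Cauchy--Schwarz argument that produced \eqref{M Delta bound}. But self-adjointness yields $\langle K_{i,\vect{T}}(\vect{\Delta})\Psi_{\vep,\vect{b}_i},\wt{F}\rangle=\cM_{\vep}(\vect{b}_i, K_{i,\vect{T}}(\vect{\Delta})\wt{F})$, and the Cauchy--Schwarz bound for this quantity involves $\|(\Delta_j-\lambda_j)K_{i,\vect{T}}(\vect{\Delta})\wt{F}\|$, not $\|(\Delta_j-\lambda_j)\wt{F}\|$. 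Converting the former into the latter needs an operator bound that you do not supply. The paper instead works directly on the spectral side: it writes $\langle K_{i,\vect{T}}(\vect{\Delta})\Psi_{\vep,\vect{b}_i},\wt{F}\rangle - K_{i,\vect{T}}(\vect{\lambda})\langle\Psi_{\vep,\vect{b}_i},\wt{F}\rangle$ as $\int_{\cS}\left(K_{i,\vect{T}}(\vect{\lambda}')-K_{i,\vect{T}}(\vect{\lambda})\right)\wh{\Psi_{\vep,\vect{b}_i}}(\vect{\lambda}')\wh{\wt{F}}(\vect{\lambda}')\,\rd\wt{\mu}(\vect{\lambda}')$ and then applies the mean-value theorem to $K_{i,\vect{T}}$ together with \eqref{diagonal}. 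This is the step that produces the $\|(\Delta_j-\lambda_j)\wt{F}\|$ terms directly, and it is not the same as your operator-transfer argument.

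Second, and more substantially, the localization. Testing against an approximate identity $\wt{F}_n$ concentrated at $\vect{\lambda}$ runs into a normalization problem on the continuous spectrum: $\|(\Delta_j-\lambda_j)\wt{F}_n\|\lesssim \sigma_n\|\wt{F}_n\|_\Gamma$, while for $\langle G,\wt{F}_n\rangle$ to converge to a nonzero pointwise value of $\wh{G}$ one must normalize $\wh{F}_n$ to unit mass in $L^1(\rd\wt{\mu})$, and under that normalization $\|\wt{F}_n\|_\Gamma$ generically grows like $\wt{\mu}(B_{\sigma_n})^{-1/2}\asymp\sigma_n^{-1}$, so the bound you get is $O(1)$, not $o(1)$. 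The paper avoids this by choosing the specific test function $\wh{F}:=\wh{G_{\vep,\vect{T}}}\cdot\mathbbm{1}_{B_\sigma(\vect{\lambda})}$ — a spectral truncation of $G$ itself, not an approximate identity. With this choice $\langle G,F\rangle=\int_{B_\sigma}|\wh{G}|^2\,\rd\wt{\mu}$ and $\|(\Delta_j-\lambda_j)F\|\ll\sigma\left(\int_{B_\sigma}|\wh{G}|^2\,\rd\wt{\mu}\right)^{1/2}$, giving $\int_{B_\sigma}|\wh{G}|^2\,\rd\wt{\mu}\ll\sigma^2$. It then takes a second step: forming cumulative distribution functions of $|\wh{G}|^2$ in each spectral coordinate and showing their partial derivatives vanish, from which $\wh{G}=0$ $\wt{\mu}$-a.e.\ follows. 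Your "Parseval identifies the limit with $\wh{G_{\vect{T}}}(\vect{\lambda})$" glosses over exactly this differentiation step, which is where the argument actually lands. Your remarks about engineering the $\vect{b}_i$ for uniform invertibility and about the logarithmic companion solutions on the tempered boundary are on target and match the paper's treatment.
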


  \begin{proof}

    First, note that for any $i$ we can apply the spectral transform and the mean value theorem to
    \begin{align*}
      \<K_{i,\vect{T}}(\vect{\Delta}) \Psi_{\vep,\vect{b}}, F \> - K_{i,\vect{T}}(\vect{\lambda})\< \Psi_{\vep,\vect{b}},F\> &= \int_{\cS} (K_{i,\vect{T}}(\vect{\lambda}^\prime) - K_{i,\vect{T}}(\vect{\lambda})) \wh{\Psi_{\vep,\vect{b}}}(\vect{\lambda}^\prime) \wh{F}(\vect{\lambda}^\prime) \ \rd \wt{\mu}(\vect{\lambda}')\\
      &\ll \int_{\cS} \abs{\vect{\lambda}^\prime - \vect{\lambda}} \wh{\Psi_{\vep,\vect{b}}}(\vect{\lambda}^\prime) \wh{F}(\vect{\lambda}^\prime)  \rd \wt{\mu}(\vect{\lambda}')\\
      &\ll \int_{\cS} (\abs{\lambda_1^\prime - \lambda_1}+\abs{\lambda_2^\prime - \lambda_2}) \wh{\Psi_{\vep,\vect{b}}}(\vect{\lambda}^\prime) \wh{F}(\vect{\lambda}^\prime)  \rd \wt{\mu}(\vect{\lambda}')  \\
      &\ll \|(\lambda_1 - \Delta_1) F\|\| \Psi_{\vep,\vect{b}}\| + \|(\lambda_2 - \Delta_2) F\|\| \Psi_{\vep,\vect{b}}\|.
    \end{align*}
    From here we can use the definition of $K_{i,\vect{T}}$ to conclude (see \cite[Proof of Lemma 3.5]{Kontorovich2009} for details)
    \begin{align*}
      \cM_{\vep}(\vect{T})  =  \sum_{i=1} K_{i,\vect{T}}(\vect{\Delta}) \cM_{\vep}(\vect{b}_i,F) + O(\|(\Delta_1 - \lambda_1)F\|) + O(\|(\Delta_2 - \lambda_2)F\|).
    \end{align*}
    Define for $z \in \cH$
    \begin{align*}
      G_{\vep,\vect{T}}(z) : = 
      \Psi_{\vep,\vect{T}}(z)  -  \sum_{i=1} K_{i,\vect{T}}(\vect{\Delta}) \Psi_{\vep,\vect{b}_i}(z).
    \end{align*}
    Then \eqref{main ident SL3R} will follow if we can show $G_{\vep,\vect{T}}=0$ almost everywhere.

    Now for any function $F$ and any point in the spectrum $\vect{\lambda}$ we have
    \begin{align*}
      \<G_{\vep,\vect{T}}, F \> =O(\|(\Delta_1 - \lambda_1)F\|) + O(\|(\Delta_2 - \lambda_2)F\|).
    \end{align*}
    Fix a $\sigma>0$ and a point $\vect{\lambda}$  and define $F$ according to its spectrum:
    \begin{align*}
      \wh{F}(\vect{\lambda}^\prime) :=
      \begin{cases}
        \wh{G_{\vep,\vect{T}}}(\vect{\lambda}^\prime) & \mbox{ if } \vect{\lambda}^\prime \in B_\sigma(\vect{\lambda}) \\
        0 & \mbox{ otherwise,} 
      \end{cases}
    \end{align*}
    where $\wh{G_{\vep,\vect{T}}}$ denotes the spectral transform of $G_{\vep,\vect{T}}$, and $B_\sigma(\vect{\lambda})$ denotes a ball of radius $\sigma$ around $\vect{\lambda}$. Now apply Parseval's identity \eqref{Parseval rank}
    \begin{align*}
      \< G_{\vep,\vect{T}},F\>& = \<\wh{G_{\vep,\vect{T}}},\wh{F}\>\\
             &= \int_{B_{\sigma}(\vect{\lambda})}\abs{\wh{G_{\vep,\vect{T}}}(\vect{\lambda}^\prime)}^2 \rd \wt{\mu}(\vect{\lambda}^\prime).
    \end{align*}
   Using \eqref{diagonal} yields
    \begin{align*}
      \|(\Delta_1-\lambda_1)F\| &\ll \left(\int_{0}^\infty\int_0^\infty \abs{(\lambda_1^\prime - \lambda_1)\wh{F}(\vect{\lambda})}^2 \rd \wt{\mu}(\vect{\lambda})\right)^{1/2}\\
        &\ll \sigma \left(\int_{B_\sigma(\vect{\lambda})} \abs{\wh{G_{\vep,\vect{T}}}(\vect{\lambda}^\prime)}^2 \rd \wt{\mu}(\vect{\lambda}^\prime)\right)^{1/2},
    \end{align*}
    and similarly
    \begin{align*}
      \|(\Delta_2-\lambda_2)F\|
      &\ll \sigma \left(\int_{B_\sigma(\vect{\lambda})} \abs{\wh{G_{\vep,\vect{T}}}(\vect{\lambda}^\prime)}^2 \rd \wt{\mu}(\vect{\lambda}^\prime)\right)^{1/2}.
    \end{align*}
    Thus, for any $\sigma$ we have
    \begin{align*}
      \int_{B_{\sigma}(\vect{\lambda})}\abs{\wh{G_{\vep,\vect{T}}}(\vect{\lambda}^\prime)}^2 \rd \wt{\mu}(\vect{\lambda}^\prime) \ll  \sigma \left(\int_{B_\sigma(\vect{\lambda})} \abs{\wh{G_{\vep,\vect{T}}}(\vect{\lambda}^\prime)}^2 \rd \wt{\mu}(\vect{\lambda}^\prime) \right)^{1/2}.
    \end{align*}
    If the left hand side vanishes we are done. If not, then
    \begin{align}
      \int_{B_\sigma(\vect{\lambda})} \abs{\wh{G_{\vep,\vect{T}}}(\vect{\lambda}^\prime )}^2 \rd \wt{\mu}(\vect{\lambda}^\prime) \ll \sigma^2
    \end{align}
    for any $\vect{\lambda}$ and $\sigma$. Now  let
    \begin{align*}
    f(\vect{\lambda}): = \int_{\lambda_1^\prime < \lambda_1}\int_{\abs{\lambda_2^\prime - \lambda_2}<\sigma} \abs{\wh{G_{\vep,\vect{T}}}(\vect{\lambda}^\prime )}^2 \rd \wt{\mu}(\vect{\lambda}^\prime)
    \end{align*}
    then we can take a derivative
    \begin{align*}
     \frac{\rd f}{\rd \lambda_1}(\lambda_1,\lambda_2) = \frac{f(\lambda_1+\sigma,\lambda_2)-f(\lambda_1-\sigma,\lambda_2)}{2\sigma} = \lim_{\sigma \to 0} \frac{1}{2\sigma} \int_{X} \abs{\wh{G}_{\vep,\vect{T}}(\lambda_1^\prime)}^2 \rd \wt{\mu}(\lambda_1^\prime) = 0.
    \end{align*}
    Thus $f^\prime(\vect{\lambda}) =0$ for all $\vect{\lambda}$, and since $f(0)=0$ we conclude that $f=0$ for all values of $\vect{\lambda}$.

    Now let
    \begin{align*}
      \wt{f} (\vect{\lambda}) = \int_{\lambda_1^\prime < \lambda_1}\int_{\lambda_2^\prime <  \lambda_2} \abs{\wh{G_{\vep,\vect{T}}}(\vect{\lambda}^\prime )}^2 \rd \wt{\mu}(\vect{\lambda}^\prime)
    \end{align*}
    then it is easy to show that $ \frac{\rd}{\rd \lambda_2}\wt{f}(\vect{\lambda}) =0$ for all $\vect{\lambda}$ and $\wt{f}(0)=0$. Similarly we can deduce that $ \frac{\rd}{\rd \lambda_1}\wt{f}(\vect{\lambda}) =0$ for all $\vect{\lambda}$. Thus $\wt{f}(\vect{\lambda})=0$ for all $\vect{\lambda}$. From which it follows that $\wh{G}_{\vep,\vect{T}}$ is $0$ almost everywhere.

  \end{proof}

  \subsection{Proof of Theorem \ref{thm:SL3R}}

  Now, with the main identity at hand, we can proceed with the proof of Theorem \ref{thm:SL3R}.  By Parseval's identity \eqref{Parseval rank}  
  \begin{align*}
   \cM_{\vep}(\vect{T},F) &= \< \Psi_{\vep,\vect{T}}, F\>_\Gamma\\
   &= \< \wh{\Psi_{\vep,\vect{T}}}, \wh{F} \>_{\Spec(\Gamma)}\\
   &= \wh{F}(\vect{\lambda}^{(0)}) \wh{\Psi_{\vep,\vect{T}}}(\vect{\lambda}^{(0)}) + \wh{F}(\vect{\lambda}^{(1)}) \wh{\Psi_{\vep,\vect{T}}}(\vect{\lambda}^{(1)}) + \dots + \wh{F}(\vect{\lambda}^{(k)}) \wh{\Psi_{\vep,\vect{T}}}(\vect{\lambda}^{(k)}) \\
   &\phantom{++++++++++++++}+ \int_{\cS^{cont}} \wh{F}(\vect{\lambda}) \wh{\Psi_{\vep,\vect{T}}}(\vect{\lambda}) \rd\wt{\mu}(\vect{\lambda}).
  \end{align*}
  Since $F$ has compact support and the $i^{th}$ eigenspace has finite dimension, we know that the projection onto the $i^{th}$ eigenspace,
  \begin{align*}
    \wh{F}(\vect{\lambda}^{(i)}) = \sum_{k=1}^\kappa\< F, \phi_{i,k}\>
  \end{align*}
  is a finite constant, where $\kappa$ is the multiplicity of the $i^{th}$ eigenspace. Furthermore, using our bounds for $K_{i,\vect{T}}$, \eqref{K bounds} we have
 \begin{align*}
   \wh{\Psi_{\vep,\vect{T}}}(\vect{\lambda}^{(i)}) = \sum_{k=1}^\kappa\left(\sum_{j=1}^6 c_j  T_1^{-s_j^{(i)}}T_2^{-r_j^{(i)}} \<\Psi_{\vep,\vect{1}},\phi_{i,k}\>\right) 
   (1+O(\vep T_1^{-1} + \vep T_2^{-1})). 
 \end{align*}
 Moreover, by the same mean value argument we have that $\<\Psi_{\vep,\vect{1}},\phi_{i,k}\> =  C+O(\vep)$. From whence it follows that
 \begin{align*}
   \wh{F}(\vect{\lambda}^{(i)}) \wh{\Psi_{\vep,T}}(\vect{\lambda}^{(i)}) = C_{\Gamma,F} \left(\sum_{j=1}^6 c_j  T_1^{-s_j^{(i)}}T_2^{-r_j^{(i)}}\right)(1+O(\vep)).
 \end{align*}
 
 Turning to the error term, we apply the fact that the Casimir operators are diagonal on the dual space
 \begin{align*}
   &\int_{\cS^{cont}} \wh{F}(\vect{\lambda}) \wh{\Psi_{\vep,\vect{T}}}(\vect{\lambda}) \rd\wt{\mu}(\vect{\lambda})\\
   &\phantom{+++++}
   =
   \int_{\cS^{cont}} \wh{F}(\vect{\lambda}) \left(\sum_{i=1}^6 \wh{K_{i,\vect{T}}(\vect{\Delta})\Psi_{\vep,\vect{b}_i}}(\vect{\lambda})\right) \rd\wt{\mu}(\vect{\lambda})\\
   &\phantom{+++++}
   =
   \int_{\cS^{cont}} \wh{F}(\vect{\lambda}) \left(\sum_{i=1}^6 K_{i,\vect{T}}(\vect{\lambda})\wh{\Psi_{\vep,\vect{b}_i}}(\vect{\lambda})\right) \rd\wt{\mu}(\vect{\lambda})\\
   &\phantom{+++++}
   \ll
   T_1^{- 1}T_2^{-1} \log T_1 \log T_2\int_{\cS^{cont}} \wh{F}(\vect{\lambda}) \left(\sum_{i=1}^6\wh{\Psi_{\vep,\vect{b}_i}}(\vect{\lambda})\right) \rd\wt{\mu}(\vect{\lambda}).
  \end{align*}
  To conclude we apply the abstract Parseval's identity and Cauchy-Schwarz giving
  \begin{align*}
    \int_{\cS^{cont}} \wh{F}(\vect{\lambda}) \wh{\Psi_{\vep,\vect{T}}}(\vect{\lambda}) \rd\wt{\mu}(\vect{\lambda})
   &\ll T_1^{- 1}T_2^{-1} \log T_1\log T_2 \|F\|_{\Gamma} \| \Psi_{\vep,\vect{1}}\|_\Gamma.
  \end{align*}
  Since $F$ is assumed to be in $L^2(\Gamma \bk \cH)$ its norm is bounded. As for the second term we have
  \begin{align*}
    \| \Psi_{\vep,\vect{1}}\|_\Gamma \ll \frac{1}{\vep}.
  \end{align*}
  Leading to the following 'thickened' version of the equidistribution result 
  \begin{align}\label{rank thick infinite}
    \begin{aligned}
      &\cM_{\vep}(\vect{T},F) = c_0m_{\vect{T}}(\vect{\lambda}^{(0)}) + c_1 m_{\vect{T}}(\vect{\lambda}^{(1)})+\dots + c_k m_{\vect{T}}(\vect{\lambda}^{(k)})\\
      &\phantom{+++++++++++++++++++}+ O(\vep^{-1}T_1^{-1}T_2^{-1} \log T_1 \log T_2  \|F\|_{\Gamma}    ),
    \end{aligned}
  \end{align}
  where, given a $\vect{\lambda}$ in the spectrum $m_{\vect{T}}(\lambda_1):= \sum_{i=1}^6 T_1^{s_i}T_2^{r_i}$. Note that the $c_i$ depend on the group, $\vep$ and on $F$. Moreover for $i =0, \dots, k$ we can write $c_i = C_i+O(\vep)$ where $C_i$ are independent of $\vep$.

  \subsection{Proof of Theorem \ref{thm:SL3R}}
  \label{ss:Proof of SL3R}

  Finally, we need to estimate the difference $\abs{\cM_{\vep}(\vect{T},F) - \cM(\vect{T},F)}$. Now we apply the mean value theorem to conclude
  \begin{align*}
    \abs{\cM_{\vep}(\vect{T},F) - \cM(\vect{T},F)} \ll \vep \|F\|_{1,\infty}.
  \end{align*}
  Hence we choose $\vep$ to maximize the error terms, and note that $m_{\vect{T}}(\vect{\lambda}^{(0)}) \asymp 1$, leading to
  \begin{align*}
    \vep^{-1}T_1^{-1}T_2^{-1} \log T_1 \log T_2 \|F\|_{\Gamma} = \vep\|F\|_{1,\infty}.
  \end{align*}
  Solving for $\vep$ then yields
  \begin{align*}
    \vep =  T_1^{-1/2}T_2^{-1/2} (\log T_1 \log T_2)^{1/2} \|F\|_{\Gamma}^{1/2}\|F\|_{1,\infty}^{-1/2}.
  \end{align*}

  \section{Expanding Horospheres $\SL_n(\R)$}
  \label{s:SLnR}

  Finally, we extend our proof to all $n\ge 2$. For for general $n$ the proof is almost identical to the proof for $\SL_3(\R)$; the only essential input is a particular structure of the Casimir operators, $\Delta_1, \dots, \Delta_{n-1}$ summarized in the following theorem:
  
  \begin{theorem}[Structure Theorem of Casimir Operators]
    For $\SL_n(\R)$, when acting on a right $K$-invariant function, $f\in L^2(\Gamma \bk \cH)$,  the Laplace-Beltrami operator satisfies
    \begin{align}\label{LB form}
      \int_{\Gamma \bk H} \Delta_1 f( h a_{\vect{y}}) \rd h
      = \left(\sum_{i=1}^{n-1} y_i^2 \partial_{y_1y_1} - \sum_{i=1}^{n-2} y_i y_{i+1} \partial_{y_i y_{i+1}} \right) \int_{\Gamma \bk H} f( h a_{\vect{y}}) \rd h.
    \end{align}
    Furthermore, for each Casimir operator, $\Delta_i$ there exists a differential operator $D_{i}$ in the $\vect{y}$ variables, such that
    \begin{align} \label{Cas form}
      \int_{\Gamma \bk H} \Delta_i f( h a_{\vect{y}}) \rd h
      = D_i \int_{\Gamma \bk H} f( h a_{\vect{y}}) \rd h.
    \end{align}
    
  \end{theorem}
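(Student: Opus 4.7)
The plan is to identify each $\Delta_i$ with its ``radial part'' with respect to the horospherical subgroup $H$. I first write each $\Delta_i$, via the Iwasawa decomposition $G/K \cong HA$, as a differential operator in the coordinates $(\vect{x}, \vect{y})$, and split
\begin{align*}
  \Delta_i = D_i(\vect{y}, \partial_{\vect{y}}) + R_i(\vect{x}, \vect{y}, \partial_{\vect{x}}, \partial_{\vect{y}}),
\end{align*}
where $D_i$ collects those monomials with no $\vect{x}$-derivative and $R_i$ collects those with at least one. The left-$H$-invariance of $\Delta_i$ (which lies in $Z(U(\mathfrak{g}))$) forces the coefficients in $D_i$ to be $\vect{x}$-independent, so $D_i$ is honestly a differential operator on $A$. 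The explicit formulas for $\Delta_1$ and $\Delta_2$ on $\SL_3(\R)$ given earlier in the paper exhibit exactly this dichotomy and serve as a template.

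The key step is to show that $\int_{\Gamma_H \bk H} R_i F(h a(\vect{y})) \rd h = 0$ for every smooth $\Gamma$-automorphic right-$K$-invariant $F$. Since $\Gamma_H \bk H$ is a compact boundaryless nilmanifold (by the hypothesis that $\Gamma_H \bk G$ is closed), each monomial $P_\alpha(\vect{x}, \vect{y}) \partial_{\vect{x}}^{\alpha_1} \partial_{\vect{y}}^{\alpha_2} F$ with $|\alpha_1|\geq 1$ may be integrated by parts in one of the $x_{k,\ell}$ appearing in $\alpha_1$. When the coefficient is independent of that variable (as with the $\SL_3$ term $2y_1 x_2 \partial_{x_1 x_3} F$ integrated in $x_1$, or $y_1^2(x_2^2 + y_2^2) \partial_{x_3 x_3} F$ integrated in $x_3$), the IBP immediately produces zero by periodicity. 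When the coefficient does depend polynomially on the variable of integration, a single IBP produces a bulk term containing a derivative of $F$ in some other $x_{k',\ell'}$ plus a boundary contribution that is itself a total derivative in a third variable; iterating, every such monomial is expressed as a total $\vect{x}$-derivative and hence integrates to zero.

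Finally, for the explicit formula \eqref{LB form}, the pure $\vect{y}$-part of the Laplace--Beltrami operator $\Delta_1$ is computed directly from the invariant volume form $\rd V = \prod_{i<j} \rd x_{i,j} \prod_k y_k^{-k(n-k)-1} \rd y_k$ using the standard formula $\Delta f = |g|^{-1/2} \partial_p (|g|^{1/2} g^{pq} \partial_q f)$; this is the classical calculation for the symmetric space $\SL_n(\R)/\SO_n(\R)$, reproduced e.g.\ in Goldfeld. For \eqref{Cas form} with $i > 1$, only the existence of $D_i$ is needed, which is immediate from the splitting above. The principal obstacle is the combinatorial bookkeeping for $R_i$ at general $n$: one must verify that each of the many monomials in $\Delta_i$ involving an $\vect{x}$-derivative can, after at most finitely many IBPs alternating between different $x_{k,\ell}$, be written as a total $\vect{x}$-derivative. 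The $\SL_3$ formulas already display every pattern that arises, so the extension to general $n$ is mechanical rather than conceptual.
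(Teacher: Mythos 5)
Your proposal follows essentially the same strategy as the paper's proof --- split each $\Delta_i$ into a pure $\vect{y}$-part $D_i$ plus a remainder $R_i$ carrying $\partial_{\vect{x}}$-derivatives, use invariance to see that the $D_i$ coefficients are $\vect{x}$-independent, and kill $R_i$ by integration by parts over $\Gamma_H \bk H$ --- but the integration-by-parts step contains a genuine gap. There are no boundary terms over the closed nilmanifold, so the clause ``a boundary contribution that is itself a total derivative in a third variable'' does not describe anything and signals a misconception. More seriously, if a coefficient $P(\vect{x},\vect{y})$ genuinely depends on every $x$-variable appearing in its accompanying $\partial_{\vect{x}}$-derivatives, then iterating integration by parts does \emph{not} terminate in a vanishing total derivative: for example $\int x_1 x_2\,\partial_{x_1}\partial_{x_2}F\,\rd\vect{x}$ becomes, after two integrations by parts, $\int F\,\rd\vect{x}$, which is generically nonzero. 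What makes the theorem true is a structural fact that your argument never supplies: every $\partial_{\vect{x}}$-monomial occurring in a Casimir has coefficient independent of at least one of the $x_{k,\ell}$ it differentiates, so that a \emph{single} integration by parts in that variable already gives zero by periodicity --- as in each of the $\SL_3(\R)$ terms you quote. The paper establishes exactly this at the level of the generating first-order operators $T_X$: for $X = X_{y_k}$ the coefficients of the $\partial_{\vect{x}}$'s vanish and the $\partial_{\vect{y}}$-coefficients are $\vect{x}$-independent, while for $X = X_{x_i}$ the $\partial_{\vect{y}}$-coefficients vanish and the $\partial_{x_i}$-coefficient is independent of $x_i$; the claim for Casimir monomials then follows by induction on products. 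Your proof needs this structural input (or an explicit citation for it) in place of the iteration argument, which is not correct as stated.
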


  \begin{proof}

    We begin with an example for $n= 4$. In that case, if we integrate out the $\vect{x}$ coordinates, then using Goldfeld's {\tt gln} Mathematica package \cite{Goldfeld2006} we have that the Laplace-Beltrami operator, after integrating out the $\vect{x}$-directions, in $n=4$ is given by
  \begin{align*}
    \Delta_1 f =\left( 
    y_1^2\partial_{y_1y_1}
    +y_2^2\partial_{y_2y_2}    
    +y_3^2\partial_{y_3y_3}
    -y_2 y_1\partial_{y_1y_2}
    -y_2 y_3\partial_{y_2y_3}
    \right)f.
  \end{align*}
  The degree $3$ operator is:
  \begin{align*}
    \Delta_2 f = (
    &
    y_1^2 \partial_{y_1y_1}
    -3 y_2 y_1^2 \partial_{y_1y_1y_2}
    -7 y_2 y_1 \partial_{y_1y_2}
    +3 y_2^2 y_1 \partial_{y_1y_2y_2}
    -2 y_3^2\partial_{y_3y_3}
    \\
    &
    \phantom{+++++++}
   -y_2 y_3 \partial_{y_2y_3}
   +3 y_2 y_3^2 \partial_{y_2y_3y_3}
   +4 y_2^2\partial_{y_2y_2}
   -3 y_2^2y_3 \partial_{y_2y_2y_3}
   ) f
  \end{align*}
  and the degree $4$ Casimir is then
  \begin{align*}
    \Delta_3 & f = (y_1^4\partial_{y_1}^4 +4
    y_1^3\partial_{y_1}^3   -2 y_1^3 y_2 \partial_{y_1}^3\partial_{y_2}
    +21 y_1^2 \partial_{y_1}^2
    -12 y_2 y_1^2 \partial_{y_1}^2\partial_{y_2}
    +3 y_2^2 y_1^2 \partial_{y_2}^2\partial_{y_1}^2\\
    &
    -9 y_2 y_1 \partial_{y_2}\partial_{y_1}
    +6 y_2^2 y_1\partial_{y_1}\partial_{y_2}^2
    -2 y_2^3 y_1 \partial_{y_2}^3 \partial_{y_1}
    -3 y_3^2\partial_{y_3}^2
    +4 y_3^3 \partial_{y_3}^3
    +y_3^4 \partial_{y_3}^4
    +3 y_2 y_3 \partial_{y_2y_3}\\
    &
    -2 y_2 y_3^3 \partial_{y_2}\partial_{y_3}^3
    +3 y_2^2 \partial_{y_2}^2
    -6 y_2^2 y_3 \partial_{y_2}^2 \partial_{y_3}
    +3 y_2^2 y_3^2 \partial_{y_2}^2 \partial_{y_3}^2
    +4 y_2^3 \partial_{y_2}^3
   -2 y_2^3 y_3 \partial_{y_2}^3\partial_{y_3}
   +y_2^4 \partial_{y_2}^4
   )f.
   \end{align*}
   
  In general, we can calculate the Casimir operators as in \cite[Proposition 2.3.3]{Goldfeld2006}. Then analyzing the differential operators from \cite[Definition 2.2.1]{Goldfeld2006} and applying an inductive argument on their products is enough to prove the \eqref{LB form} and \eqref{Cas form}. That is, any element of the lie algebra, $X$ we can associate a first order differential operator coming from
  \begin{align*}
    T_X f = \frac{\rd}{\rd t} f(g e^{tX}) \bigg|_{t=0}.
  \end{align*}
  Thus $T_X$ has the form
  \begin{align*}
    T_X = \eta_1 \partial_{x_1} + \dots + \eta_{n(n-1)/2} \partial_{x_{n(n-1)/2}} + \mu_1 \partial_{y_1} + \dots + \mu_{n-1}\partial_{y_{n-1}}.
  \end{align*}
  Since we are multiplying on the right, it is evident that for the differential operators associated to $X_{y_k}$ the coefficients $\eta_i=0$ for $i = 1, \dots n(n-1)/2$ and the $\mu_j$ do not depend on $\vect{x}$.

  Moreover for the differential operators associated to $X_{x_i}$, it is not hard to see (by matrix multiplication rules) that $\eta_i$ does not depend on $x_i$ and $\mu_i=0$ for $i = 1, \dots, n-1$. From here \eqref{Cas form} follows via integration by parts. The same holds for the dual elements $\overline{X_{x_i}}$. The Laplace-Beltrami operator can be found in numerous places, and \eqref{Cas form} follows from a similar matrix multiplication argument. 

  \end{proof}

  Once again, fix an $\vep>0$, the proof begins by thickening the $\vect{y}$ directions. Recall that the volume measure is
  \begin{align*}
    \rd \vect{x} \prod_{k=1}^{n-1} y_k^{-k(n-k)-1} \rd y_k
  \end{align*}
  Therefore let
  \begin{align*}
    \psi^{(i)}_{T_i,\vep} = \vep^{-1} \one ([1/T- \vep/T^{k(n-k)+1}, 1/T+ \vep/T^{k(n-k)+1}]).
  \end{align*}
  For $z\in \cH$ let
  \begin{align*}
    \psi_{\vect{T},\vep}(z) : = \prod_{i=1}^{n-1} \psi^{(i)}_{T_i,\vep}(y_i)
  \end{align*}
  and let $\Psi_{\vect{T},\vep}:= \sum_{\Gamma_H \bk \Gamma} \psi_{\vect{T},\vep}(\gamma z)$. Then our thickened average is
  \begin{align*}
    \cM_{\vep} (\vect{T},F):=\int_{\cF} F(z) \Psi_{\vect{T}, \vep}(z) \rd z,
  \end{align*}
  where $\cF$ is a fundamental domain for $\Gamma$.

  With that, we apply the same unfolding steps and set
  \begin{align*}
    f(\vect{y}) : = \int_{\Gamma_H \bk H} F(ha(\vect{y})) \rd h
  \end{align*}
  then the  analogue of \eqref{diffeq 1} and \eqref{diffeq 2} is: for any $\vect{\lambda}$ 
  \begin{align}
    D_i f(\vect{y}) = \int_{\Gamma_H \bk H} (\Delta_i - \lambda_i) F(h a(\vect{y})) \rd h,
  \end{align}
  for $i =1 , \dots, n-1$.
  Which, by moving to the unitary dual and using the fact that the Casimir elements are diagonal, yields the analogue of \eqref{M ident pre}, that there exist functions $K_{i,\vect{T}}(\vect{\lambda})$ and times $\vect{b}_i$ such that we can express
  \begin{align*}
    \cM_{\vep}(\vect{T})  = \sum_{i=1}^{L}K_{i,\vect{T}}(\vect{\lambda}) \cM_{\vep}(\vect{b}_i,F) + \sum_{i=1}^{n-1} O(\| (\Delta_i - \lambda_i)F\|)
  \end{align*}
  recall that $L=n!$ denotes the number of solutions $\vect{\nu}$ associated to the point $\vect{\lambda}$. From whence we can derive the main identity
  \begin{align}\label{main ident SLnR}
    \Psi_{\vep,\vect{T}} = \sum_{i=1}^{L} K_{i, \vect{T}}(\vect{\Delta}) \Psi_{\vep, \vect{b}_i}.
  \end{align}
  Moreover we have that 
  \begin{align*}
    K_{i, \vect{T}} (\vect{\lambda} ) \ll \begin{cases}
      I_{\vect{\nu}_i}(\vect{T})  & \mbox{ if } \vect{\nu}_i \in [0,1/n)^{n-1}\\
      I_{cont}(\vect{T}) \log(T_1) \cdots \log T_{n-1} & \mbox{ if } \nu_{i,j} = 1/n +i t_j \mbox{ for all } j.
    \end{cases}
  \end{align*}
  From here, the remainder is a somewhat trivial generalization of Section \ref{s:SL3R}. Namely, after applying the spectral decomposition and \eqref{main ident SLnR} we arrive at the thickened equidistribution result
  \begin{align}
    \cM_{\vep}(T,F) = \sum_{i=0}^k c_i m_{\vect{T}}(\vect{\lambda}^{(i)}) + O(\vep^{-(n-1)/2} (I_{cont}(\vect{T}))^{-1} \log T_1 \cdots \log T_{n-1} \|F\|_\Gamma ).
  \end{align}
  The mean value theorem argument yields an error of size
  \begin{align*}
    \vep m_{\vect{T}}(\vect{\lambda}^{(0)})\|F\|_{1,\infty}.
  \end{align*}
  Setting these errors equal to each other, and using that $m_{\vect{T}}(\vect{\lambda}_0) \asymp 1$ we arrive at 
  \begin{align*}
    \vep  = \left(I_{cont}(\vect{T})\|F\|_{\Gamma}\|F\|_{1,\infty}^{-1}  \log T_1 \cdots \log T_{n-1}\right)^{2/(n+1)}
  \end{align*}
  Thus our error becomes
  \begin{align}
    O \left( \left(I_{cont}(\vect{T}) \log T_1 \cdots \log T_{n-1}\right)^{2/(n+1)} \right) \|F\|_{\Gamma}^{2/(n+1)}\|F\|_{1,\infty}^{(n-1)/(n+1)}   .
  \end{align}

  \small 
  \section*{Acknowledgements}

  We thank Alex Kontorovich, Stephen D. Miller, and Siddhartha Sahi for insightful conversations. Moreover we thank Sam Edwards for pointing out his papers on the topic.

  \bibliographystyle{alpha}
  \bibliography{biblio}

    \hrulefill

    \vspace{4mm}
     \noindent Department of Mathematics, Rutgers University, Hill Center - Busch Campus, 110 Frelinghuysen Road, Piscataway, NJ 08854-8019, USA. \emph{E-mail: \textbf{chris.lutsko@rutgers.edu}}

\end{document}